\newcommand{\stoptocwriting}{%
  \addtocontents{toc}{\protect\setcounter{tocdepth}{-5}}}
\newcommand{\resumetocwriting}{%
  \addtocontents{toc}{\protect\setcounter{tocdepth}{\arabic{tocdepth}}}}
\newtheorem{theorem}{Theorem}[section]
\newtheorem{lemma}[theorem]{Lemma}
\newtheorem{corollary}[theorem]{Corollary}
\newtheorem{proposition}[theorem]{Proposition}
\theoremstyle{definition}
\newtheorem{definition}[theorem]{Definition}
\newtheorem{problem}[theorem]{Problem}
\theoremstyle{remark}
\newtheorem{remark}[theorem]{Remark}
\numberwithin{equation}{section}
\newcommand*{\RR}{\mathbb{R}}
\newcommand*{\NN}{\mathbb{N}}
\newcommand*{\ZZ}{\mathbb{Z}}
\DeclareMathOperator{\EE}{\mathbb{E}} 
\DeclareMathOperator{\PP}{\mathbb{P}} 
\DeclareMathOperator{\Ent}{Ent} 
\DeclareMathOperator{\Var}{Var} 
\DeclareMathOperator{\calH}{\mathcal{H}} 
\DeclareMathOperator{\calE}{\mathcal{E}} 
\title{P-log-Sobolev inequalities on $\NN$}
\author{Bart{\l}omiej Polaczyk}
\email{b.polaczyk@mimuw.edu.pl}
\thanks{Research partially supported by the National Science Centre, Poland, via the Preludium grant no.\ 2020/37/N/ST1/02667.}
\begin{document}
\begin{abstract}
	We answer an open problem posed by Mossel--Oleszkiewicz--Sen regarding relations between $p$-log-Sobolev inequalities for $p\in(0,1]$.
	We show that for any interval $I\subset(0,1]$, there exist $q,p\in I$, $q<p$, and a measure $\mu$ for which the $q$-log-Sobolev inequality holds, while the $p$-log-Sobolev inequality is violated.
	As a tool we develop certain necessary and closely related sufficient conditions characterizing those inequalities in the case of birth-death processes on $\NN$.
\end{abstract}
\maketitle
\tableofcontents
\section{Preliminaries}
\subsection{Background}
Functional inequalities are one of the central objects of modern probability theory.
They arise naturally when studying mixing times of Markov chains and are an important tool in proving concentration and hypercontractive estimates.
Arguably, the most prominent examples are the Poincar\'{e} and log-Sobolev inequalities since most of the other functional inequalities known in the literature arise either as a modification of one of them (e.g., various modified log-Sobolev inequalities or inequalities with defects) or as a product of some sort of procedure that interpolates between the above two.

An important example of the latter type is a family of Beckner inequalities introduced by Beckner~\cite{MR954373} who showed they hold true in the case of the Gaussian measure.
They were further studied, e.g., by Lata{\l}a and Oleszkiewicz~\cite{MR1796718} who used them to obtain intermediate concentration estimates between subexponential and subgaussian. 
Another particular example is a family of $p$-log-Sobolev inequalities introduced by Gross~\cite{MR420249} for $p>1$.
His definition was then extended by Bakry~\cite{MR1307413} to any real $p$.
Mossel et al.~\cite{MosselOleszkiewiczSen} studied $p$-log-Sobolev inequalities in the context of reverse hypercontraction.

A vibrant area of research is the study of the relations between various inequalities.
E.g., it is by now classical that the log-Sobolev inequality implies the Poincar\'{e} inequality.
Such sort of results can often have far-reaching consequences as demonstrated, e.g., in~\cite{MosselOleszkiewiczSen}, where the authors prove that for $0\le q<p\le 2$, the $p$-log-Sobolev inequality is stronger than the $q$-log-Sobolev inequality (cf. Theorem~\ref{T:MOS} below).
They exploit this relation further to prove the reverse hypercontraction for measures satisfying the modified log-Sobolev inequality, which found various applications, cf., e.g.,~\cite{DBLP:journals/tit/Raginsky16,MR3876877}.
A similar situation occurred in~\cite{MR4372142}, where the authors proved that the modified log-Sobolev inequality implies a particular family of Beckner inequalities, which allows deriving strong concentration and moment bounds from the modified log-Sobolev inequality.
Both of the mentioned results are examples of positive results (i.e., claiming that one inequality implies the other) and follow from abstract arguments that involve direct comparison of Dirichlet forms.
Similar results can be found in, e.g., the works of Diaconis--Saloff-Coste~\cite{MR1410112} or Bobkov--Tetali~\cite{MR2283379}.

Another approach needs to be taken in the case of negative results, i.e., when showing that one inequality does not imply another.
E.g., it is classical that the Poincar\'{e} inequality does not imply the log-Sobolev inequality. 
In that case, one counterexample is the exponential measure which satisfies the Poincar\'{e} inequality and does not satisfy the log-Sobolev inequality as the latter implies subgaussian concentration.
To prove such statements, it is often useful to derive some characterization of the functional inequality in question.
These characterizations are often of independent interest, as the conditions they are expressed in are usually much more accessible than a direct proof of the functional inequalities.
Some of the results that provide such characterizations include: the work by Bobkov and G\"otze~\cite{MR1682772} who, by viewing the log-Sobolev inequality in the general framework of Orlicz spaces, characterize the log-Sobolev inequality on $\RR$; the work by Miclo~\cite{miclo} who uses Hardy inequalities on $\ZZ$ to characterize the Poincar\'{e} inequality on trees
or the work by Barthe--Roberto~\cite{bartheroberto} who treat the case of the modified log-Sobolev inequalities on $\RR$.

The aim of this note is to answer an open question from~\cite{MosselOleszkiewiczSen} on the relation between $p$-log-Sobolev inequalities for $p\in(0,1]$, cf. Problem~\ref{Pr:MOS} below.
Our result falls into the category of negative examples described above.
As a byproduct, we develop a sufficient condition and complement it with a closely related necessary condition for $p$-log-Sobolev inequalities, which are of independent interest.
Below we describe our setting and results in more detail.

\subsection{General setup}\label{sec:general_setup}
Let $(\Omega, \mathcal{B}, \mu)$ denote some discrete probability space.
We assume that $\mu$ is fully supported on $\Omega$.
Let $P\colon [0,\infty)\times \Omega \times\mathcal{B} \to [0,1]$ be a homogeneous Markov transition function for which $\mu$ is an invariant measure.
We assume that $P$ is reversible with respect to $\mu$ and that it induces a strongly continuous semigroup  $(P_t)_{t\ge 0}$ of operators on $L_2(\Omega, \mu)$, defined as 
\[
	P_tf(x) 
	= 
	\int_\Omega 
	f(y)
	P(t, x, dy).	
\]
It can be then shown that for each $f\in L_2(\Omega,\mu)$, the mapping
\[
	(0,\infty)
	\ni
	t\mapsto 
	\frac{1}{2t}
	\int_\Omega\int_\Omega
	(f(x)-f(y))^2
	P(t,x,dy)\mu(dx)
\]
is non-increasing.
Denote 
\begin{equation}\label{eq:calh_def}
	\calH 
	= 
	\Bigl\{\,
		f\in L_2(\Omega,\mu)
		\,\colon\,
		\sup_{t\ge 0}
		\frac{1}{2t}
		\int_\Omega\int_\Omega
		(f(x)-f(y))^2
		P(t,x,dy)\mu(dx)
		< 
		\infty
	\,\Bigr\}	
\end{equation}
to be a domain of the Dirichlet form $\calE$ associated with this semigroup, given by the formula 
\begin{equation}\label{eq:cale_def}
	\calE(f,g)
	=
	\lim_{t\to 0^+}
	\frac{1}{2t}
	\int_\Omega\int_\Omega
	(f(x)-f(y))
	(g(x)-g(y))
	P(t,x,dy)\mu(dx)
\end{equation}
for $f,g\in\calH$.
If $L$ is the infinitesimal generator of the semigroup $(P_t)_{t\ge 0}$, defined via 
\[
	Lf 
	= 
	\lim_{t\to 0^+}
	\frac{P_t f - f}{h}	
\]
with the convergence in the $L_2$ sense, then for $f,g$ belonging to the domain $\calH_L$ of $L$, 
\[ 
	\calE(f,g) = - \int fLg\,d\mu.
\]
We refer the Reader to~\cite{MR2091955,MR2778606,MR2574430,MR3155209,MR1258492,MR4372142} for a detailed treatment of Markov processes, generators, Dirichlet forms and associated domains.

Set $\calH_+= \calH \cap \RR_+^{\Omega}$, where $\RR_+ = (0,\infty)$.
Below we gather some notation and results from~\cite{MosselOleszkiewiczSen}.
Note that while the results in~\cite{MosselOleszkiewiczSen} are stated for finite spaces, the extension to general discrete spaces is straightforward as discussed in Section~12.3 therein.
\begin{definition}\label{def:ineqs}
For $p\in\RR\setminus\{0,1\}$, the $p$-log-Sobolev inequality is satisfied with constant $C>0$ if
\begin{equation}\label{eq:p-LS}
	\Ent_\mu(f^p) \le \frac{Cp^2}{4(p-1)}\calE(f^{p-1},f),
\end{equation}
for all $f\in\calH_+$, such that $f^{p-1}\in\calH_+$.

The 1-log-Sobolev inequality is satisfied with constant $C>0$ if
\begin{equation}\label{eq:1-LS}
	\Ent_\mu(f) \le \frac{C}{4}\calE(f,\log f)
\end{equation}
for all $f\in\calH_+$, such that $\log f\in\calH$.

The 0-log-Sobolev inequality is satisfied with constant $C>0$ if
\begin{equation}\label{eq:0-LS}
	\Var_\mu(\log f) \le \frac{C}{2}\calE(f,-1/f)
\end{equation}
for all $f\in\calH_+$, such that $1/f\in\calH_+$.

The Poincar\'{e} inequality is satisfied with constant $C>0$ if 
\begin{equation}\label{eq:Poincare}
	\Var_\mu(f) \le \frac{C}{2}\calE(f,f)
\end{equation}
for all $f\in\calH$.
\end{definition}

We write $p$-LS($C$) for short to denote the $p$-log-Sobolev inequality with constant $C$.
We say that a pair $(\mu, \calE)$ satisfies the $p$-log-Sobolev inequality if $p$-LS($C$) is satisfied with some finite $C>0$.
If the underlying Dirichlet form $\calE$ is clear from context, we omit it and simply say the $\mu$ satisfies the $p$-log-Sobolev inequality.
Note that the $1$-log-Sobolev inequality is often referred to as the \emph{modified} log-Sobolev (or entropic) inequality in the literature and that the $0$-log-Sobolev and $1$-log-Sobolev inequalities are limiting cases for $p$-log-Sobolev inequalities for $p\in\RR\setminus\{0,1\}$.

Results below provide some basic relations between the inequalities introduced in Definition~\ref{def:ineqs}.

\begin{proposition}[{\cite[Lemma 3.1]{MosselOleszkiewiczSen}}]\label{P:MOS-Poincare-equivalence}
	The Poincar\'{e} inequality~\eqref{eq:Poincare} with constant $C>0$ is equivalent to the 0-log-Sobolev inequality with the same constant~$C$.
\end{proposition}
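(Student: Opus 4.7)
The plan is to treat both implications via the change of variables $g=\log f$ together with the identity
\[
	\bigl(f(x)-f(y)\bigr)\bigl((-1/f)(x)-(-1/f)(y)\bigr)=\frac{(f(x)-f(y))^2}{f(x)f(y)},
\]
which via~\eqref{eq:cale_def} shows both that $\calE(f,-1/f)\ge 0$ and that it is an explicit non-negative integral quantity.

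For the implication Poincar\'e $\Rightarrow$ 0-LS I would substitute $g=\log f$ in~\eqref{eq:Poincare} and then control $\calE(\log f,\log f)$ by $\calE(f,-1/f)$ through the pointwise estimate
\[
	(\log a - \log b)^2 \le \frac{(a-b)^2}{ab}\qquad(a,b>0).
\]
Setting $a=e^s$, $b=e^t$ this reduces to $|s-t|\le 2\sinh(|s-t|/2)$, i.e.\ to the elementary bound $u\le\sinh u$ for $u\ge 0$. Integrating this pointwise inequality against $P(t,x,dy)\mu(dx)/(2t)$ establishes both that $\log f\in\calH$ whenever $f,1/f\in\calH_+$ and that $\calE(\log f,\log f)\le\calE(f,-1/f)$, so that Poincar\'e applied to $\log f$ yields~\eqref{eq:0-LS} with the same constant $C$.

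For the reverse implication I would test~\eqref{eq:0-LS} with the family $f_\varepsilon=e^{\varepsilon g}$ where $g\in\calH$ is bounded. Then $\log f_\varepsilon=\varepsilon g$, so the left-hand side equals $\varepsilon^2\Var_\mu(g)$ exactly, while the identity above combined with $(e^\alpha-e^\beta)^2/e^{\alpha+\beta}=4\sinh^2((\alpha-\beta)/2)$ gives
\[
	\calE(f_\varepsilon,-1/f_\varepsilon)=\lim_{t\to 0^+}\frac{1}{2t}\int\!\!\int 4\sinh^2\!\Bigl(\tfrac{\varepsilon(g(x)-g(y))}{2}\Bigr)P(t,x,dy)\mu(dx).
\]
Using $\sinh u\le u\cosh u$ for $u\ge 0$, the integrand is bounded above by $\varepsilon^2(g(x)-g(y))^2\cosh^2(\varepsilon\|g\|_\infty)$, hence $\calE(f_\varepsilon,-1/f_\varepsilon)\le\varepsilon^2\cosh^2(\varepsilon\|g\|_\infty)\calE(g,g)$. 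Dividing~\eqref{eq:0-LS} through by $\varepsilon^2$ and sending $\varepsilon\to 0^+$ produces~\eqref{eq:Poincare} with the same constant $C$ for bounded $g$, and a standard truncation argument then extends it to all of $\calH$.

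The main technical obstacle I anticipate is justifying the limits on the Dirichlet forms—in particular the passage $\varepsilon\to 0^+$ under the $t\to 0^+$ limit defining $\calE$—together with the final truncation/density step. In the discrete birth--death framework that is the focus of this paper both are routine (the Dirichlet form is simply a sum weighted by the generator), but in the general abstract setup they merit some care.
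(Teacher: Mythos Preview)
The paper does not supply its own proof of Proposition~\ref{P:MOS-Poincare-equivalence}; it is quoted verbatim from~\cite{MosselOleszkiewiczSen} (Lemma~3.1 there) without argument. So there is nothing in the present manuscript to compare your proposal against.

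That said, your proof is correct and is the standard one. The pointwise inequality $(\log a-\log b)^2\le (a-b)^2/(ab)$ gives $\calE(\log f,\log f)\le\calE(f,-1/f)$, whence Poincar\'e for $\log f$ yields~\eqref{eq:0-LS}; and testing~\eqref{eq:0-LS} on $f_\varepsilon=e^{\varepsilon g}$ with $g$ bounded, together with $\sinh u\le u\cosh u$, recovers~\eqref{eq:Poincare} after dividing by $\varepsilon^2$ and letting $\varepsilon\to 0^+$. The only points that need checking are that $f_\varepsilon,1/f_\varepsilon\in\calH_+$ (true because $g$ is bounded and $x\mapsto e^{\varepsilon x}$ is Lipschitz on bounded sets) and the final truncation; you have correctly identified these as routine in the discrete birth--death setting of this paper.
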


We set $p'=p/(p-1)$ for the H\"{o}lder conjugate of $p\in\RR\setminus\{1\}$.
\begin{proposition}[{\cite[Lemma 3.2]{MosselOleszkiewiczSen}}]\label{P:holder-equivalence}
	For $p\in\RR\setminus\{1\}$, $p$-LS(C) is equivalent to $p'$-LS(C).
	\end{proposition}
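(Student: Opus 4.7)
The plan is to show the equivalence by a simple change of variable that interchanges the roles of $p$ and $p'$, using the symmetry of the Dirichlet form $\calE$. The starting observation is that if $g = f^{p-1}$, then $f = g^{1/(p-1)}$; and because $p' - 1 = 1/(p-1)$, this can be rewritten as $f = g^{p'-1}$. Thus the substitution $f \leftrightarrow g$ given by $g = f^{p-1}$ is an involution in the sense that swapping the roles of $p$ and $p'$ corresponds exactly to swapping the roles of $f$ and $g$.

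With this substitution in hand, I would verify the three algebraic identities needed. First, $f^p = g^{p/(p-1)} = g^{p'}$, so $\Ent_\mu(f^p) = \Ent_\mu(g^{p'})$. Second, by the symmetry $\calE(u,v) = \calE(v,u)$, we have $\calE(f^{p-1}, f) = \calE(g, g^{p'-1})$. Third, for the constants, note that $p = p'/(p'-1)$, from which
\[
\frac{p^2}{p-1} = \frac{(p')^2/(p'-1)^2}{1/(p'-1)} = \frac{(p')^2}{p'-1},
\]
so the prefactor $Cp^2/(4(p-1))$ transforms into $C(p')^2/(4(p'-1))$. Combining these three facts, the inequality $p$-LS($C$) applied to $f$ is literally the same inequality as $p'$-LS($C$) applied to $g$.

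It then remains to check that the side conditions on the domains match. The hypothesis in $p$-LS($C$) is that $f \in \calH_+$ and $f^{p-1} \in \calH_+$; under $g = f^{p-1}$ this reads $g^{p'-1} \in \calH_+$ and $g \in \calH_+$, which is precisely the hypothesis in $p'$-LS($C$). Since the substitution $g = f^{p-1}$ is a bijection of $\calH_+ \cap (\calH_+)^{1/(p-1)}$ onto itself (with the two factors swapped), the quantifiers match up, giving the desired equivalence. The case $p=0$ is self-conjugate ($p'=0$) and the analogous substitution $g=1/f$ together with $\log f = -\log g$ and the symmetry of $\calE$ yields the equivalence directly from Definition~\ref{def:ineqs}\eqref{eq:0-LS}.

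I do not anticipate a genuine obstacle here: the argument is purely algebraic once one has the symmetry of $\calE$ and the identity $p-1 = 1/(p'-1)$. The only thing that needs a moment of care is book-keeping of the domain conditions, to make sure one is not silently enlarging or shrinking the class of admissible test functions when passing from $f$ to $g$.
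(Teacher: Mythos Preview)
Your argument is correct and is exactly the standard substitution proof: the change of variable $g=f^{p-1}$ together with the identity $p'-1=1/(p-1)$ and the symmetry of $\calE$ turns the $p$-inequality for $f$ into the $p'$-inequality for $g$, with the constant unchanged because $p^2/(p-1)=(p')^2/(p'-1)$. Note, however, that the present paper does not supply its own proof of this proposition; it simply quotes the result from \cite[Lemma~3.2]{MosselOleszkiewiczSen}, where precisely this substitution argument is used. So there is nothing to compare---your proof is the intended one.
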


In particular, Proposition~\ref{P:holder-equivalence} implies that the study of relations between various $p$-log-Sobolev inequalities can be reduced to the case $p\in[0,2]$.
Moreover, since in the continuous setting (in the presence of the chain rule) all $p$-log-Sobolev inequalities for $p\neq 0$ are equivalent to the usual log-Sobolev inequality\footnote{This can be seen by substituting $f^2 \leftarrow f^p$ in the definition of the $p$-log-Sobolev inequality for $p\in\RR\setminus\{0\}$.} and since by Proposition~\ref{P:MOS-Poincare-equivalence}, 0-log-Sobolev inequality is equivalent to the Poincar\'{e} inequality which is strictly weaker than the log-Sobolev inequality, the only interesting range of $p$ is in fact $p\in(0,2]$.

Denote
\begin{equation}\label{eq:calep_def}
	\calE_p(f)
	=
	\begin{cases}
		pp'\calE(f^{1/p}, f^{1/p'})
		& \text{if } p\in  (0,2] \setminus \{1\},\\
		\calE(f,\log f)
		& \text{if } p =1,
	\end{cases}
\end{equation}
so that $p$-LS($C$) for $p\in (0,2]$ is equivalent to 
\[ 
	\Ent_\mu(f)\le \frac{C}{4}\calE_p(f).
\]
\begin{proposition}[{\cite[Theorem 2.1]{MosselOleszkiewiczSen}}]\label{P:MOS_main}
For any positive $f$, the mapping $(0,2] \ni p\mapsto \calE_p(f)$ is non-increasing.
\end{proposition}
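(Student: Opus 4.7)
The plan is to reduce the monotonicity of $p\mapsto\calE_p(f)$ to a pointwise monotonicity statement and then verify the latter via an integral representation with a manifestly monotone integrand.

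First, I would invoke the standard discrete representation of the Dirichlet form associated with a reversible Markov chain on $\Omega$,
\[
	\calE(g,h)=\tfrac{1}{2}\sum_{x,y\in\Omega}(g(x)-g(y))(h(x)-h(y))\,q(x,y)\mu(x),
\]
with jump rates satisfying $q(x,y)\mu(x)=q(y,x)\mu(y)\ge 0$. Substituting $g=f^{1/p}$, $h=f^{1/p'}$ (and interpreting the $p=1$ case by continuity), $\calE_p(f)$ becomes a non-negative combination, with weights $q(x,y)\mu(x)/2$ independent of $p$, of the pointwise expressions
\[
	\Phi_p(a,b):=\begin{cases}pp'(a^{1/p}-b^{1/p})(a^{1/p'}-b^{1/p'})&p\ne 1,\\(a-b)(\log a-\log b)&p=1,\end{cases}
\]
taken at $a=f(x),\,b=f(y)$. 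It therefore suffices to show that $p\mapsto\Phi_p(a,b)$ is non-increasing on $(0,2]$ for each fixed $a,b>0$. Using the homogeneity $\Phi_p(\lambda a,\lambda b)=\lambda\Phi_p(a,b)$ (a consequence of $1/p+1/p'=1$), I would set $b=1$, $a=t>0$ with $t\ne 1$, and switch to $u=1/p\in[1/2,\infty)$, so that $1/p'=1-u$ and $pp'=1/(u(1-u))$. The claim reduces to showing that
\[
	g(u):=\frac{(t^u-1)(t^{1-u}-1)}{u(1-u)},\qquad g(1):=(t-1)\log t,
\]
is non-decreasing on $[1/2,\infty)$.

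The main technical step is an integral representation that decouples the dependence on $u$. From $t^x-1=x\log t\int_0^1 t^{xs}\,ds$ I obtain
\[
	g(u)=(\log t)^2\int_0^1\!\!\int_0^1 t^{us+(1-u)r}\,ds\,dr.
\]
Symmetrizing under the swap $s\leftrightarrow r$ and using the identity $t^A+t^B=2t^{(A+B)/2}\cosh(\tfrac{(A-B)\log t}{2})$ with $A=us+(1-u)r$ and $B=ur+(1-u)s$ (so that $A+B=s+r$ and $A-B=(2u-1)(s-r)$) then yields
\[
	g(u)=(\log t)^2\int_0^1\!\!\int_0^1 t^{(s+r)/2}\cosh\!\Bigl(\tfrac{(2u-1)(s-r)\log t}{2}\Bigr)\,ds\,dr.
\]
For $u\in[1/2,\infty)$ the coefficient $2u-1\ge 0$ is non-decreasing in $u$, and since $\cosh(\alpha x)$ is even in $x$ and non-decreasing in $|\alpha|$, the integrand is non-decreasing in $u$ pointwise in $(s,r)$. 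Hence $g$ is non-decreasing on $[1/2,\infty)$, which via the reductions above gives the desired monotonicity of $\calE_p(f)$ on $(0,2]$.

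The main obstacle I expect is arriving at this reformulation rather than any subsequent calculation. The $\cosh$-representation is what simultaneously treats both regimes $p\in(0,1)$ (i.e.\ $u>1$) and $p\in(1,2]$ (i.e.\ $u\in[1/2,1)$) without any case analysis on the signs of $\log t$ or of $2u-1$, and makes the monotonicity visible termwise in the integrand rather than only after integration.
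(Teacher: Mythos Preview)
The paper does not give its own proof of this proposition: it is simply quoted from \cite[Theorem~2.1]{MosselOleszkiewiczSen}, so there is nothing in the present paper to compare your proposal against. Your argument is nonetheless correct; the reduction to a pointwise two-point inequality via the jump representation of the reversible Dirichlet form, followed by the integral/$\cosh$ identity
\[
\frac{(t^{u}-1)(t^{1-u}-1)}{u(1-u)}=(\log t)^2\int_0^1\!\!\int_0^1 t^{(s+r)/2}\cosh\!\Bigl(\tfrac{(2u-1)(s-r)\log t}{2}\Bigr)\,ds\,dr,
\]
which makes the monotonicity in $u=1/p\ge 1/2$ manifest termwise, is exactly the standard route taken in the cited reference. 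The only point worth flagging is that the representation $\calE(g,h)=\tfrac12\sum_{x,y}(g(x)-g(y))(h(x)-h(y))q(x,y)\mu(x)$ presupposes the jump-kernel description of the generator on the discrete space; this is legitimate in the paper's setting (Section~\ref{sec:general_setup}) but is not written out explicitly there.
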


\begin{remark}
	Note that~\cite[Theorem 2.1]{MosselOleszkiewiczSen} is stated for $p\in(0,2]\setminus\{1\}$ but a natural extension to the case $p=1$ is straightforward as discussed in Remark~2.2 therein.
\end{remark}

Proposition~\ref{P:MOS_main} serves as a tool for obtaining the following main result of~\cite{MosselOleszkiewiczSen}.
\begin{theorem}[{\cite[Theorem 1.7]{MosselOleszkiewiczSen}}]\label{T:MOS}
For any $0\le q\le p\le 2$, $p$-LS($C$) implies $q$-LS($C$).
Moreover, for any $1<q\le p \le 2$, $q$-LS($C$) implies $p$-LS($\tilde{C}$) with $\tilde{C}=Cqq'/pp'$.
\end{theorem}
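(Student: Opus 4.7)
My plan is to treat the two claims separately, in each case reducing to a statement about $\calE(f^{1/r},f^{1/r'})$ viewed as a function of $r\in(1,2]$.

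For the first claim with $q\in(0,2]$, I would simply invoke Proposition~\ref{P:MOS_main}: since $p$-LS($C$) for $p\in(0,2]$ is equivalent to $\Ent_\mu(f)\le (C/4)\calE_p(f)$ and $\calE_p(f)\le \calE_q(f)$ whenever $q\le p$, the inequality $\Ent_\mu(f)\le (C/4)\calE_q(f)$ follows at once, which is $q$-LS($C$). The remaining case $q=0$ I would handle by linearization: substitute $f=1+\varepsilon g$ with $\int g\,d\mu=0$ into $p$-LS($C$) and expand to order $\varepsilon^2$. A routine Taylor expansion yields $\Ent_\mu(f^p)=\tfrac{p^2\varepsilon^2}{2}\Var_\mu(g)+O(\varepsilon^3)$ and $\calE(f^{p-1},f)=(p-1)\varepsilon^2\calE(g,g)+O(\varepsilon^3)$; the weights $p^2/(p-1)$ cancel and, after letting $\varepsilon\to 0$, one obtains the Poincar\'e inequality $\Var_\mu(g)\le (C/2)\calE(g,g)$, which by Proposition~\ref{P:MOS-Poincare-equivalence} coincides with $0$-LS($C$).

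For the second claim, the target constant $\tilde C=Cqq'/pp'$ suggests that the relevant comparison is between $\calE(f^{1/q},f^{1/q'})$ and $\calE(f^{1/p},f^{1/p'})$ rather than between $\calE_q$ and $\calE_p$ themselves. Concretely, I would prove the pointwise inequality
\[
    (u^{1/q}-v^{1/q})(u^{1/q'}-v^{1/q'}) \le (u^{1/p}-v^{1/p})(u^{1/p'}-v^{1/p'})
    \qquad (1<q\le p\le 2,\ u,v>0),
\]
from which, after integrating against the Dirichlet kernel in~\eqref{eq:cale_def}, one gets $\calE(f^{1/q},f^{1/q'})\le \calE(f^{1/p},f^{1/p'})$ and hence $\calE_q(f)\le (qq'/pp')\calE_p(f)$. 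The pointwise bound follows from a short identity: writing $1/r=\tfrac12+s_r$ with $s_r\in[0,\tfrac12)$ for $r\in(1,2]$, and setting $\rho=\sqrt{u/v}$, a direct computation gives
\[
    (u^{1/r}-v^{1/r})(u^{1/r'}-v^{1/r'})
    = (\sqrt{u}-\sqrt{v})^2-\sqrt{uv}\,\bigl(\rho^{2s_r}+\rho^{-2s_r}-2\bigr).
\]
The map $s\mapsto \rho^{2s}+\rho^{-2s}$ is even, strictly convex, and attains its minimum $2$ at $s=0$, hence is increasing in $|s|$. Since $q\le p$ gives $s_q\ge s_p\ge 0$, the subtracted term is larger at $r=q$, yielding the pointwise inequality. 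Combining with $q$-LS($C$) produces $p$-LS($\tilde C$).

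The main difficulty lies in the second claim: Proposition~\ref{P:MOS_main} controls only the weighted quantity $pp'\calE(f^{1/p},f^{1/p'})$, whereas the constant $Cqq'/pp'$ is the signature of the \emph{unweighted} comparison above, and the two monotonicities are not interchangeable. Isolating the pointwise decomposition via the parameter $s_r$ is the key step; everything else is a routine manipulation of the formulas from the preliminaries.
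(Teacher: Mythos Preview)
The paper does not supply its own proof of Theorem~\ref{T:MOS}; it is quoted from \cite{MosselOleszkiewiczSen} and used as a black box. Your argument is correct and is essentially the one underlying that reference: the first claim for $q\in(0,2]$ is exactly the intended consequence of Proposition~\ref{P:MOS_main}, and your linearization at $f=1+\varepsilon g$ is the standard way to recover the Poincar\'e (i.e., $0$-log-Sobolev) inequality from any $p$-LS($C$), $p>0$.

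For the second claim your pointwise identity
\[
    (u^{1/r}-v^{1/r})(u^{1/r'}-v^{1/r'})
    = (\sqrt u-\sqrt v)^2-\sqrt{uv}\bigl(\rho^{2s_r}+\rho^{-2s_r}-2\bigr),
    \qquad \rho=\sqrt{u/v},\ s_r=\tfrac1r-\tfrac12,
\]
is a clean way to see monotonicity in $r\in(1,2]$, since the right-hand side equals $u+v-2\sqrt{uv}\cosh\bigl(2s_r\log\rho\bigr)$ and $\cosh$ is even and increasing on $[0,\infty)$. This immediately gives $\calE(f^{1/q},f^{1/q'})\le\calE(f^{1/p},f^{1/p'})$ and hence $\calE_q(f)\le(qq'/pp')\calE_p(f)$, yielding the stated constant. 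There is nothing to compare against in the present paper, but your route is correct and self-contained.
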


The main goal of this paper is to provide an answer to the following question posed in~\cite[Section 12]{MosselOleszkiewiczSen}.
\begin{problem}\label{Pr:MOS}
	Is there any subset $I\subset (0,1]$ with non-empty interior, such that for  any $p,q\in I$, $p$-LS$(C)$ implies $q$-LS$(c(I)C)$, where $c(I)>0$ depends on $I$ only?
\end{problem}

\section{Main result}\label{sec:main-result}
Let $\mu$ be a measure on $\NN$ with full support.
We use the convention that $\mu_k=\mu(\{k\})$ and $\mu[k,\infty) = \mu([k,\infty))$.
For any $f\colon\NN\to\RR$ and $k\in\NN$, denote $Df(k) = f(k+1) - f(k)$. 
Consider the following Dirichlet form
\begin{align}\label{eq:calE_birth-death}
	\calE(f,g) 
	=
	\sum_{k\ge 0} Df(k)Dg(k)\mu_k
\end{align}
defined for 
$
	f,g\in
	\{\,
		h\in L_2(\Omega,\mu)\colon \sum_{k\ge 0} (Dh(k))^2\mu_k < \infty
	\,\}
	=:
	\calH
	$.
The corresponding birth-death dynamics generator is given by the formula
\begin{equation}\label{eq:logSobolev-generator}
	Lf(k) 
	= 
	Df(k)
	- 
	{\bf 1}_{\{ k>0 \} }
	\frac{\mu_{k-1}}{\mu_k}
	Df(k-1),
\end{equation}
so that $\calE(f,g)=-\int fLg\,d\mu$ for $f,g\in\calH_L\subset\calH$.
A Markov process generated by $L$ given by~\eqref{eq:logSobolev-generator} and Dirichlet form given by~\eqref{eq:calE_birth-death} with stationary measure $\mu$ being a geometric measure is investigated in~\cite{MR1944012} as an example of a pair $(\mu,\calE)$ satisfying the Poincar\'{e} inequality~\eqref{eq:Poincare} and violating the modified log-Sobolev inequality~\eqref{eq:1-LS}.

\begin{remark}
	It follows from the general theory of birth and death processes (see, e.g.,~\cite{MR2091955,MR2574430}), that $\mathcal{E}$ is indeed a Dirichlet form corresponding to a Markov process on $\NN$.
	As a consequence, when proving a functional inequality in question it actually suffices to consider simple (i.e., having finitely many jumps) functions.
	Therefore, henceforth we will sometimes restrict from specifying particular domains and simply assume that the expressions we introduce are considered for functions for which they are well-defined as one can always restrict the attention to the class of simple functions.
\end{remark}

For $x>0$, set 
\begin{equation}\label{eq:H_p_def}
	H_p(x) = 
	\begin{cases}
		pp'(x^{1/p}-1)(x^{1/p'}-1)
		& \text{if } p\in(0,1),\\
		(x-1)\log(x)
		& \text{if } p=1
	\end{cases}	
\end{equation}
so that $\calE_p$ defined in~\eqref{eq:calep_def} is given by 
\begin{equation}\label{eq:calep_birth_death}
	\calE_p(f) 
	=
	\sum_{k\ge 0} f(k) H_p\bigl(\frac{f(k+1)}{f(k)}\bigr)\mu_k
\end{equation}
and $p$-LS($C$) for $p\in (0,1]$ is equivalent to
\begin{equation}\label{eq:pLS_birth_death}
	\Ent_\mu(f) \le  
	\frac{C}{4}
	\sum_{k\ge 0} f(k) H_p\bigl(\frac{f(k+1)}{f(k)}\bigr)\mu_k,
\end{equation}
while $0$-LS($C$) (i.e., the Poincar\'{e} inequality) is equivalent to 
\begin{equation}\label{eq:poincare_N}
	\Var_\mu(f) 
	\le \frac{C}{2}\sum_{l=0}^\infty (Df(l))^2\mu_l.
\end{equation}

Theorem below is our main result.
\begin{theorem}\label{C:main}
	For any $p\in(0,1)$, there exists a measure $\mu$ on $\NN$ that does not satisfy the $p$-log-Sobolev inequality but satisfies the $q$-log-Sobolev inequality for all $q\in (0, p)$.
	In particular, there are no intervals~$I$ that meet the conditions posed in Problem~\ref{Pr:MOS}.
\end{theorem}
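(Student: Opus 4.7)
The strategy is to deploy the necessary and sufficient conditions for $p$-log-Sobolev inequalities on $\NN$ that the paper develops as its main analytical tool, and to exhibit, for each $p\in(0,1)$, an explicit birth--death measure $\mu^{(p)}$ whose tail decay is calibrated precisely to the critical threshold for $p$-LS.

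First, I would invoke the necessary condition for $p$-LS$(C)$, which I expect to take a Hardy--Miclo form: a supremum over $n\ge 0$ of $\mu[n,\infty)$ times a logarithmic factor, divided by a weighted sum of $1/\mu_k$ for $k<n$, with weights dictated by $H_p$ through~\eqref{eq:H_p_def}. The closely matching sufficient condition should differ only by auxiliary tail weights. By Proposition~\ref{P:MOS_main} and Theorem~\ref{T:MOS}, both conditions are monotone in $p$ in the direction consistent with the implication $p\text{-LS}\Rightarrow q\text{-LS}$ for $q\le p$; in particular, any measure that violates the $p$-LS necessary condition will satisfy the $q$-LS sufficient condition for $q$ sufficiently smaller than $p$. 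The nontrivial task is to upgrade ``sufficiently smaller'' to \emph{every} $q\in(0,p)$.

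Next, I would construct the candidate family $\mu^{(p)}$. Since $H_p(x)\sim \frac{p^2}{1-p}x^{1/p}$ as $x\to\infty$ with an exponent $1/p$ that depends strictly and continuously on $p$, a natural ansatz is $\mu^{(p)}_k\propto \exp(-\varphi_p(k))$ with a stretched--exponential profile, possibly with a slowly varying logarithmic correction tuned to make the Hardy--Miclo quantity borderline divergent for exponent $p$. With such $\mu^{(p)}$ in hand, the plan is: (i) produce an explicit test function, e.g.\ a smoothed indicator of the form $f = 1 + \varepsilon\mathbf{1}_{[n,\infty)}$ with $n\to\infty$ and $\varepsilon=\varepsilon(n)$ suitably chosen, which witnesses failure of~\eqref{eq:pLS_birth_death} via the necessary condition; (ii) for every fixed $q<p$, show that the $q$-LS sufficient quantity evaluated on $\mu^{(p)}$ is finite, exploiting the strict monotonicity of the characterizing quantity in the exponent. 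The second assertion of the theorem is then immediate: given $I\subset(0,1]$ of non-empty interior, pick any $q<p$ in $I$; the measure $\mu^{(p)}$ separates $q$-LS from $p$-LS, so no uniform constant $c(I)$ can exist.

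The main obstacle is the calibration step. Hardy-type necessary and sufficient conditions on $\NN$ generically differ by a multiplicative or logarithmic slack, so the measure $\mu^{(p)}$ must be designed so that the characterizing quantity lies \emph{exactly} on the boundary at exponent $p$ but is strictly subcritical for every $q<p$, with no room for loss. This requires delicate pointwise control of $H_p(x)$ both as $x\to\infty$ and near $x=1$, together with uniform estimates as $q\uparrow p$; in particular, one must ensure that the correction factor in $\varphi_p$ is absorbed by any $q<p$ but not by $p$ itself. Once this calibration is in place, the verification reduces to plugging $\mu^{(p)}$ into the Hardy-type conditions of the preceding section and performing straightforward, if technical, estimates.
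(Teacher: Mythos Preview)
Your high-level strategy---invoke Theorem~\ref{T:main}, then exhibit for each $p\in(0,1)$ a measure whose tail is calibrated to the threshold---is exactly what the paper does. But two concrete points in your plan miss the mark.

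First, the ansatz is wrong. A stretched exponential $\mu_k\propto\exp(-ck^\alpha)$ with $\alpha<1$ fails Poincar\'e (so the sufficient half of Theorem~\ref{T:main} is unavailable), while $\alpha\ge 1$ gives tail ratios $\mu[n-1,\infty)/\mu[n,\infty)$ that either stay bounded or explode super-polynomially, yielding no $p$-dependent threshold. What the conditions~\eqref{eq:main_condition}--\eqref{eq:necessary_cond} actually require is that this tail ratio grow \emph{polynomially}, say like $n^{\nu}$, so that $H_q$ of the ratio behaves like $n^{\nu/q}$ and the comparison with $\log(1/\mu[n,\infty))\sim \nu n\log n$ produces a sharp cutoff at $q=\nu$. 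That forces $\varphi_p(k)\sim p\,k\log k$, not $k^\alpha$. The paper simply takes the Conway--Maxwell--Poisson law $\mu_\nu(k)\propto(k!)^{-\nu}$ with $\nu=p$, for which $\mu_{n-1}/\mu_n=n^{\nu}$ exactly and everything is explicit.

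Second, you overstate the calibration difficulty. Your worry about ``multiplicative or logarithmic slack'' between the necessary and sufficient conditions is unfounded here: with the choice $\tau_n=n$, the extra factor $\mu[\tau_{n-1},\infty)/\mu[\tau_n-1,\infty)$ in~\eqref{eq:necessary_cond} is identically $1$, so~\eqref{eq:main_condition} and~\eqref{eq:necessary_cond} reduce to the \emph{same} quantity. There is no gap to manage; one just checks that $[H_q(n^{\nu})]^{-1}\cdot \nu n\log n$ is bounded for $q<\nu$ and unbounded for $q=\nu$, which is immediate from $H_q(x)\sim \tfrac{q^2}{1-q}x^{1/q}$. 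Also note that the conditions of Theorem~\ref{T:main} are not Hardy--Miclo sums of $1/\mu_k$ as you describe; they involve $H_p$ of tail ratios directly, with the Hardy constant $C_\mu$ entering only through the auxiliary Poincar\'e hypothesis, which for CMP is verified by a one-line estimate.
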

We construct the required counterexample and verify that it satisfies the appropriate $p$-log-Sobolev inequalities with the use of the theorem below, which is of independent interest.
\begin{theorem}\label{T:main}
	Choose any $p\in(0,1]$ and a measure $\mu$ on $\NN$ with full support and with associated Dirichlet form $\calE$, given by the birth-death process generator~\eqref{eq:calE_birth-death}. 
	
	If $\mu$ satisfies the Poincar\'{e} inequality~\eqref{eq:poincare_N} with some finite constant $C_P>0$ and 
	\begin{equation}\label{eq:main_condition}
		\hat{C}
		:= 
		\sup_{n\ge 1}
		\Big\{
			\Bigl[
			H_p\bigl( 
				\frac{\mu[n-1,\infty)}{\mu[n,\infty)}
			\bigr)
			\Bigr]^{-1}
			\cdot 
			\log\bigl(
				\frac{2}{\mu[n,\infty)}
			\bigr)
		\Big\}
		< \infty,
	\end{equation}
	then $\mu$ satisfies $p$-LS($C$)~\eqref{eq:pLS_birth_death} with some finite $C>0$.

	Contrarily, if there exists an increasing sequence $\tau_0<\tau_1<\ldots$ such that 
	\begin{equation}\label{eq:necessary_cond}
		\lim_{n\to\infty}
		\Big\{
			\Bigl[
			H_p\bigl( 
				\frac{\mu[\tau_{n-1},\infty)}{\mu[\tau_n,\infty)}
			\bigr)
			\Bigr]^{-1}
			\cdot 
			\frac{\mu[\tau_{n-1},\infty)}{\mu[\tau_{n}-1,\infty)}
			\cdot 
			\log\bigl(
				\frac{2}{\mu[\tau_n,\infty)}
			\bigr)
		\Big\}
		=
		\infty,
	\end{equation}
	then $\mu$ does not satisfy $p$-LS($C$)~\eqref{eq:pLS_birth_death} with any finite $C>0$.
\end{theorem}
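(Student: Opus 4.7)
I would produce an explicit extremizer: for each $n\ge 1$, test inequality~\eqref{eq:pLS_birth_death} against the two-level step function
\[
	f_n = 1 + (T_n-1)\mathbf{1}_{[\tau_n,\infty)},
	\qquad
	T_n = \frac{\mu[\tau_{n-1},\infty)}{\mu[\tau_n,\infty)}\ge 1.
\]
Since $f_n$ has a single upward jump, the Dirichlet form collapses to a single term
\[
	\calE_p(f_n) = H_p(T_n)\mu_{\tau_n-1} \le H_p(T_n)\mu[\tau_n-1,\infty),
\]
while
\[
	\Ent_\mu(f_n) = T_n\mu[\tau_n,\infty)\log T_n - m_n\log m_n,
	\qquad m_n := 1 + (T_n-1)\mu[\tau_n,\infty).
\]
Because $T_n\mu[\tau_n,\infty)=\mu[\tau_{n-1},\infty)\in (0,1]$ by construction, a direct computation yields $\Ent_\mu(f_n)\ge c\,\mu[\tau_{n-1},\infty)\log(2/\mu[\tau_n,\infty))$ for some absolute $c>0$, provided $\log(2/\mu[\tau_n,\infty))$ is sufficiently large, which is automatic under the divergence hypothesis. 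Dividing the two, $\Ent_\mu(f_n)/\calE_p(f_n)$ exceeds a constant multiple of the bracketed quantity in~\eqref{eq:necessary_cond}, which diverges along the given subsequence, ruling out $p$-LS$(C)$ for every finite~$C$.

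\textbf{Sufficient direction.} Here I would imitate the Hardy-based characterizations of log-Sobolev-type inequalities going back to Bobkov--G\"otze and Miclo, adapted to the $p$-log-Sobolev setting. The strategy has two stages. First, reduce to non-decreasing $f\in\calH_+$ by a rearrangement argument exploiting the half-line structure of $\NN$, then decompose via a Rothaus-type bound
\[
	\Ent_\mu(f)\le \Ent_\mu(\widetilde f) + c\,\Var_\mu(f),
\]
where $\widetilde f$ is a truncation of $f$ at its $\mu$-median. The variance term is absorbed into $\calE_p(f)$ by combining the Poincar\'e hypothesis with a Dirichlet-form comparison $\calE(f,f)\lesssim \calE_p(f)$ valid on the post-reduction class of functions, which follows from pointwise estimates on $H_p$ (notably $H_p(1+\epsilon)\asymp\epsilon^2$ near $1$). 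Second, bound $\Ent_\mu(\widetilde f)$ by a Hardy-type summation: telescope $\widetilde f$ into its increments at each level $n$, observe that the entropy contribution of the $n$-th level is of order $\mu[n,\infty)\log(2/\mu[n,\infty))$, bound this, by~\eqref{eq:main_condition}, against $\hat{C}H_p(\mu[n-1,\infty)/\mu[n,\infty))\mu[n-1,\infty)$, and finally compare the latter to the $n$-th Dirichlet form contribution $\widetilde f(n-1)H_p(\widetilde f(n)/\widetilde f(n-1))\mu_{n-1}$ and sum.

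\textbf{Main obstacle.} The sufficient direction carries the bulk of the work. The principal hurdle is the last step of the Hardy-type argument: one must pass from the measure-theoretic $H_p$-ratio $\mu[n-1,\infty)/\mu[n,\infty)$ supplied by~\eqref{eq:main_condition} to the function-theoretic ratio $\widetilde f(n)/\widetilde f(n-1)$ appearing in $\calE_p(f)$; the two quantities are a priori unrelated, and bridging them will require a sub/super\-additivity-type property of $H_p$ (of the form $H_p(xy)\lesssim xH_p(y)+H_p(x)$ or variants) together with a careful Abel summation over levels. A secondary technical matter is the rigorous justification of the monotone rearrangement step, since $\calE_p$ is not a quadratic form and does not automatically decrease under rearrangement.
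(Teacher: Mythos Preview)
Your sufficient-direction sketch is broadly aligned with the paper's approach: the paper also reduces to non-decreasing functions (via a Barthe--Roberto-type proposition combining a Rothaus/variance splitting with the Poincar\'e inequality and the comparison $\calE_2\le\calE_p$), and then controls the remaining entropy by a level-set decomposition. The execution differs from your telescoping-over-sites picture: the paper introduces stopping times $\tau_k$ at which $f$ grows by a fixed factor $\rho>1$, bounds each entropy block $\gamma_k$ against the corresponding Dirichlet block $\delta_k$ through a two-regime argument (small versus large jump of $f$ at the $k$-th level relative to the measure ratio $\mu[\tau_k-1,\infty)/\mu[\tau_k,\infty)$), and closes a recursion $\gamma_k\le\varepsilon\gamma_{k-1}+C\delta_k$. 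Your identification of the main obstacle---passing from the measure ratio in~\eqref{eq:main_condition} to the function ratio in $\calE_p(f)$---is exactly the crux, and the paper resolves it via this two-regime split together with several pointwise convexity/monotonicity estimates on $H_p$, rather than a single subadditivity identity of the type $H_p(xy)\lesssim xH_p(y)+H_p(x)$.

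Your necessary-direction argument, however, has a genuine gap: the two-level test function $f_n=1+(T_n-1)\mathbf 1_{[\tau_n,\infty)}$ does \emph{not} in general witness the failure of $p$-LS under~\eqref{eq:necessary_cond}. Take the geometric tail $\mu[n,\infty)=2^{-n}$ with $\tau_n=n$: then $T_n=2$, so $\Ent_\mu(f_n)\asymp 2^{-n}$ and $\calE_p(f_n)=H_p(2)\,\mu_{n-1}\asymp 2^{-n}$, giving a \emph{bounded} ratio---yet~\eqref{eq:necessary_cond} holds since the bracketed quantity is $\asymp n$. The flaw is your asserted entropy bound $\Ent_\mu(f_n)\ge c\,\mu[\tau_{n-1},\infty)\log(2/\mu[\tau_n,\infty))$: a direct computation gives only $\Ent_\mu(f_n)=\mu[\tau_{n-1},\infty)\log T_n-O(1)$, and $\log T_n$ can be far smaller than $\log(2/\mu[\tau_n,\infty))$. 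The paper proceeds quite differently. It argues by contradiction and tests against a \emph{multi-level} function
\[
f_M=\sum_{k=0}^{M-1}\mu[\tau_k,\infty)^{-1}\mathbf 1_{[\tau_k,\tau_{k+1})}+\mu[\tau_M,\infty)^{-1}\mathbf 1_{[\tau_M,\infty)},
\]
and---this is the missing ingredient---invokes a Herbst-type consequence of $p$-LS (a Poisson-type tail bound) to force $\log(1/\mu[\tau_k,\infty))\gtrsim k\log k$, which makes the entropy sum outgrow the Dirichlet sum as $M\to\infty$. Without that tail input, a single-jump extremizer cannot detect the divergence in~\eqref{eq:necessary_cond}.
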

\begin{remark}
	The Poincar\'{e} inequality is implied by the $p$-log-Sobolev inequality for any $p\in (0,2]$, cf. Theorem~\ref{T:MOS} and Proposition~\ref{P:MOS-Poincare-equivalence}, therefore making it a part of the sufficient condition of Theorem~\ref{T:main} (alongside~\eqref{eq:main_condition}) is non-restrictive.
\end{remark}
\begin{remark}
	The negation of~\eqref{eq:main_condition} is equivalent to the existence of an increasing sequence $\tau_0<\tau_1<\ldots$ such that 
	\[
		\lim_{n\to\infty}
		\Big\{
			\Big[
			H_p\bigl( 
				\frac{\mu[\tau_{n}-1,\infty)}{\mu[\tau_n,\infty)}
			\bigr)
			\Big]^{-1}
		\cdot 
		\log\bigl(
			\frac{2}{\mu[\tau_n,\infty)}
		\bigr)
		\Big\}
		=
		\infty.
	\] 
	If $\mu$ verifies the Poincar\'{e} inequality~\eqref{eq:poincare_N} with constant $C_P$, then by Lemma~\ref{L:rho_lower_bound} (cf. also Remark~\ref{rk:hardy-poincare}) and by Lemma~\ref{L:H_p_properties},~\ref{L:H_p_c_increasing} below, for any $\tau_{n-1}<\tau_n$ and some $c>0$,
\[
	\Bigl[
		H_p\bigl( 
			\frac{\mu[\tau_{n}-1,\infty)}{\mu[\tau_n,\infty)}
		\bigr)
		\Bigr]^{-1}
	\ge 
	\frac{1}{1+c}
	\Bigl[
	H_p\bigl( 
		\frac{\mu[\tau_{n-1},\infty)}{\mu[\tau_n,\infty)}
	\bigr)
	\Bigr]^{-1}
	\cdot 
	\frac{\mu[\tau_{n-1},\infty)}{\mu[\tau_{n}-1,\infty)}.
\]
Whence, condition~\eqref{eq:necessary_cond} implies that $\hat{C}=\infty$ and thus~\eqref{eq:main_condition} does not hold, but the reverse needs not to be true.
Thus, providing a full characterization of $p$-log-Sobolev inequalities on $\NN$ for $p\in(0,1]$ remains open.
\end{remark}
\section{Auxiliary results}
In this section we gather some lemmas and known results required for the proof of Theorem~\ref{T:main}.

\subsection{Hardy inequality}
In the sequel we put $0\cdot\infty = 0$ and $c/0=\infty$ for any $c>0$.
\begin{definition}
We say that a probability measure $\mu$ on $\NN$ satisfies the Hardy inequality with constant $C$, if 
\begin{equation}\label{eq:Hardy}
	\sum_{l=0}^\infty (f(l)-f(0))^2\mu_l
	\le 
	C 
	\sum_{l=0}^\infty (Df(l))^2\mu_l
	=
	C
	\calE(f,f)
\end{equation}
for all $f\in\calH$.
\end{definition}

Define 
\begin{equation}\label{eq:C_mu}
	C_\mu
	= 
	\sup_{k\ge 1}
	\Bigl\{
		\mu[k,\infty)
		\cdot
		\sum_{l=0}^{k-1} \frac{1}{\mu_l}
	\Bigr\}.
\end{equation}
The following result states that $C_\mu$ characterizes the Hardy inequality~\eqref{eq:Hardy}.

\begin{theorem}[Miclo~\cite{miclo}]\label{T:Miclo}
	The best constant $\hat{C}_H$ in the Hardy inequality~\eqref{eq:Hardy},
	\begin{equation}\label{eq:C_H_best}
		\hat{C}_H =
		\sup_{f\in\calH}
		\Bigl\{
			\frac{\sum_{l=0}^\infty (f(l)-f(0))^2\mu_l}{\calE(f,f)}
			\colon \calE(f,f)>0
		\Bigr\},
	\end{equation}
	verifies
	\begin{equation*}
		C_\mu 
		\le 
		\hat{C}_H 
		\le 
		4C_\mu.
	\end{equation*}
\end{theorem}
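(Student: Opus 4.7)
The plan is to prove the two inequalities $C_\mu \le \hat{C}_H$ and $\hat{C}_H \le 4 C_\mu$ separately. The lower bound is an easy test-function argument; the upper bound is the substantive part and will be established via a weighted Cauchy--Schwarz inequality followed by two telescoping estimates tied together by the defining relation $A_k B_k \le C_\mu$, where for brevity I write $A_k := \mu[k,\infty)$ and $B_k := \sum_{l=0}^{k-1}\mu_l^{-1}$, so that $C_\mu = \sup_{k\ge 1} A_kB_k$.

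For the lower bound, fix $k\ge 1$ and test~\eqref{eq:Hardy} on $f_k(l) := \sum_{j=0}^{(l\wedge k)-1}\mu_j^{-1}$ with $f_k(0)=0$. Its discrete derivative satisfies $Df_k(l) = \mu_l^{-1}\mathbf{1}_{l<k}$, so the right-hand side of~\eqref{eq:Hardy} equals $CB_k$, while the left-hand side is at least $\sum_{l\ge k} B_k^2\mu_l = A_kB_k^2$. Dividing yields $A_kB_k\le \hat{C}_H$ for every $k\ge 1$, hence $C_\mu\le\hat{C}_H$.

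For the upper bound, one may assume $f(0)=0$. Introduce the weights $\sigma_j := B_{j+1}^{1/2}$ (with the convention $\sigma_{-1}=0$), which satisfy $\sigma_j^2-\sigma_{j-1}^2 = \mu_j^{-1}$. A first Cauchy--Schwarz produces
\[
    f(l)^2 = \Big(\sum_{j<l} Df(j)\Big)^2 \le \Big(\sum_{j<l} (Df(j))^2 \sigma_j \mu_j\Big)\Big(\sum_{j<l} \frac{1}{\sigma_j \mu_j}\Big).
\]
The elementary inequality $\sigma_{j-1}^2/\sigma_j \ge 2\sigma_{j-1}-\sigma_j$ (a restatement of $(\sigma_j-\sigma_{j-1})^2\ge 0$) converts $1/(\sigma_j\mu_j)=\sigma_j-\sigma_{j-1}^2/\sigma_j$ into $\le 2(\sigma_j-\sigma_{j-1})$, which telescopes to $\sum_{j<l} 1/(\sigma_j\mu_j)\le 2\sigma_{l-1}$. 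Multiplying by $\mu_l$, summing over $l$, and swapping the order of summation reduces the proof to bounding the coefficient of $(Df(j))^2\mu_j$ in the resulting expression, namely $2\sigma_j\sum_{l>j}\mu_l\sigma_{l-1}$, by $4C_\mu$. From $A_l B_l\le C_\mu$ one gets $\sigma_{l-1}\le\sqrt{C_\mu/A_l}$, and the analogous telescoping bound $\mu_l/\sqrt{A_l}\le 2(\sqrt{A_l}-\sqrt{A_{l+1}})$ gives $\sum_{l>j}\mu_l\sigma_{l-1}\le 2\sqrt{C_\mu A_{j+1}}$. A final use of $\sigma_j\sqrt{A_{j+1}}=\sqrt{A_{j+1}B_{j+1}}\le\sqrt{C_\mu}$ delivers the desired bound $\hat{C}_H\le 4C_\mu$.

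The hardest part conceptually is picking the weight sequence $\sigma_j=\sqrt{B_{j+1}}$: it has to be simultaneously small enough for $\sum_{l>j}\mu_l\sigma_{l-1}$ to telescope against $\sqrt{A_{j+1}}$, and large enough that $\sum_{j<l}1/(\sigma_j\mu_j)$ can itself be controlled by $\sigma_{l-1}$. The square-root normalisation is the unique choice (up to constants) that makes both AM--GM steps tight simultaneously and that pairs $\sigma_j$ with $\sqrt{A_{j+1}}$ via the identity $\sigma_j\sqrt{A_{j+1}}=\sqrt{A_{j+1}B_{j+1}}$, which is precisely the quantity bounded by definition of $C_\mu$. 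The only technical point left is the vanishing of boundary terms as $l\to\infty$ in the two telescoping estimates, and this is harmless in view of the paper's standing reduction to simple functions $f$.
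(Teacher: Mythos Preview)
The paper does not prove Theorem~\ref{T:Miclo}; it merely quotes it from Miclo's work~\cite{miclo}, so there is no in-paper argument to compare against. Your proof is correct and is precisely the classical discrete Muckenhoupt argument: a weighted Cauchy--Schwarz with weight $\sigma_j=\sqrt{B_{j+1}}$, followed by the two AM--GM/telescoping estimates
\[
\frac{1}{\sigma_j\mu_j}\le 2(\sigma_j-\sigma_{j-1}),
\qquad
\frac{\mu_l}{\sqrt{A_l}}\le 2(\sqrt{A_l}-\sqrt{A_{l+1}}),
\]
and a final appeal to $A_{j+1}B_{j+1}\le C_\mu$. One small remark: for the second telescoping you do not need the reduction to simple functions, since $A_l=\mu[l,\infty)\to 0$ automatically for any probability measure; the simple-function reduction is only relevant to ensure the various sums are finite when you swap the order of summation, and even that can be bypassed by Tonelli since all terms are nonnegative.
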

\begin{remark}\label{rk:hardy-poincare}
	It is easy to see that for fully supported measures, the Poincar\'{e} inequality~\eqref{eq:poincare_N} is satisfied if and only if the Hardy inequality~\eqref{eq:Hardy} is satisfied.
	More precisely, the best constant $\hat{C}_P$ in the Poincar\'{e} inequality~\eqref{eq:poincare_N}, 
	\begin{equation}\label{eq:C_P_best}
		\hat{C}_P = 
		2
		\sup_{f\in\calH}
		\Bigl\{
			\frac{\Var_\mu(f)}{\calE(f,f)}
			\colon
			\calE(f,f)>0
		\Bigr\},
	\end{equation}
	satisfies 
	\[
		2\mu_0 \hat{C}_H
		\le 
		\hat{C}_P
		\le 
		2\hat{C}_H.
	\]
	Indeed, $\hat{C}_P \le 2\hat{C}_H$ follows from the estimate $\Var_\mu(f) \le \int (f-f(0))^2\,d\mu$.
	To see that $2\mu_0 \hat{C}_H \le \hat{C}_P$, let $f_\varepsilon\in\calH$ for any $\varepsilon > 0$ be such that $f_\varepsilon(0)=0$, $\calE(f_\varepsilon,f_\varepsilon)>0$ and $\int f_\varepsilon^2\,d\mu \ge (\hat{C}_H-\varepsilon)\calE(f_\varepsilon, f_\varepsilon)$.
	Then, by the Cauchy--Schwarz inequality, $(\int f_\varepsilon\,d\mu)^2 \le (1-\mu_0) \int f_\varepsilon^2\,d\mu$, whence 
	\begin{align*}
		\frac{\hat{C}_P}{2}\calE(f_\varepsilon,f_\varepsilon)
		\ge
		\Var_\mu(f_\varepsilon)
		\ge 
		\mu_0 
		\int f_\varepsilon^2\,d\mu
		\ge 
		\mu_0
		(\hat{C}_H-\varepsilon)\calE(f_\varepsilon, f_\varepsilon),
	\end{align*}
	and we conclude by taking $\varepsilon\to 0^+$.
\end{remark}
The quantity $C_\mu$ is useful for controlling the tail behavior of $\mu$ as demonstrated in the lemmas below.
\begin{lemma}\label{L:rho_lower_bound}
	If $\mu$ is fully supported, then 
	\begin{equation}\label{eq:quotient_upper_bound}
		\sup_{k\ge 0} 
		\frac{\mu[k,\infty)}{\mu_k} 
		\le 
		1+C_\mu
	\end{equation}
	and
	\begin{equation}\label{eq:rho_lower_bound}
		\inf_{k\ge 1} 
		\Bigl\{ 
			\frac{\mu[k-1,\infty)}{\mu[k,\infty)} 
		\Bigr\}
		\ge 
		1+\frac{1}{C_\mu}.
	\end{equation}
\end{lemma}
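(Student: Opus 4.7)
The approach is a direct unwinding of definitions, using only the telescoping $\mu[k,\infty)=\mu_k+\mu[k+1,\infty)$ and the trivial lower bound $\sum_{l=0}^{k-1}1/\mu_l \ge 1/\mu_{k-1}$ on the sum appearing in $C_\mu$.

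For \eqref{eq:quotient_upper_bound}, my plan is to write $\mu[k,\infty)/\mu_k = 1 + \mu[k+1,\infty)/\mu_k$, so that it suffices to verify $\mu[k+1,\infty)/\mu_k \le C_\mu$ for every $k\ge 0$. This is immediate from the definition~\eqref{eq:C_mu} applied at index $k+1\ge 1$, since $\sum_{l=0}^{k}1/\mu_l \ge 1/\mu_k$ gives
\[
C_\mu \;\ge\; \mu[k+1,\infty)\cdot\sum_{l=0}^{k}\frac{1}{\mu_l}\;\ge\;\frac{\mu[k+1,\infty)}{\mu_k}.
\]
The edge case $k=0$ reads $1/\mu_0 \le 1+C_\mu$, and rearranges to $\mu[1,\infty)/\mu_0\le C_\mu$, which is precisely the $k=1$ instance of~\eqref{eq:C_mu}.

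For \eqref{eq:rho_lower_bound}, I proceed symmetrically: $\mu[k-1,\infty)/\mu[k,\infty) = 1 + \mu_{k-1}/\mu[k,\infty)$, so the claim reduces to $\mu[k,\infty)/\mu_{k-1}\le C_\mu$ for every $k\ge 1$. Using $\sum_{l=0}^{k-1}1/\mu_l \ge 1/\mu_{k-1}$ in the definition of $C_\mu$ gives exactly this bound. Since both inequalities follow from one-line manipulations, there is no real obstacle; the only mild point of care is handling $k=0$ in the first inequality, which was dispatched above via the $k=1$ term of $C_\mu$.
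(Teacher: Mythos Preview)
Your proof is correct and essentially matches the paper's: both unwrap the definition of $C_\mu$ via the trivial bound $\sum_{l=0}^{k-1}1/\mu_l\ge 1/\mu_{k-1}$. The only cosmetic difference is that the paper derives \eqref{eq:rho_lower_bound} from \eqref{eq:quotient_upper_bound} (writing $\mu[k,\infty)=\mu[k-1,\infty)-\mu_{k-1}$ and applying \eqref{eq:quotient_upper_bound} at $k-1$), whereas you prove it directly and symmetrically; the underlying inequality $\mu[k,\infty)/\mu_{k-1}\le C_\mu$ is the same either way.
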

\begin{proof}
	The estimate~\eqref{eq:quotient_upper_bound} follows from the definition~\eqref{eq:C_mu} of $C_\mu$.
	Using~\eqref{eq:quotient_upper_bound}, we obtain for any $k\ge 1$,
	\begin{align*}
		\mu[k,\infty) 
		&=
		\mu[k-1,\infty) - \mu_{k-1}
		\\&\le 
		\mu[k-1,\infty) 
		- 
		\frac{\mu[k-1,\infty)}{1+C_\mu}
		= 
		\bigl(
			1+\frac{1}{C_\mu}
		\bigr)^{-1}
		\mu[k-1,\infty)
	\end{align*}
	and~\eqref{eq:rho_lower_bound} follows.
\end{proof}

\begin{lemma}\label{L:poincare_implies_integrability}
	If $\mu$ is fully supported, then for $X\sim\mu$, $\EE X \le 1 + C_\mu$.
\end{lemma}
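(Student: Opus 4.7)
The plan is to express $\EE X$ as a tail sum and then exploit the geometric decay of the tails $\mu[k,\infty)$ guaranteed by the previous lemma.

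First I would write, using that $X$ takes values in $\NN$,
\[
  \EE X = \sum_{k=1}^\infty k\mu_k = \sum_{k=1}^\infty \mu[k,\infty).
\]
Then I would invoke the lower bound~\eqref{eq:rho_lower_bound} from Lemma~\ref{L:rho_lower_bound}, which yields
\[
  \mu[k,\infty) \le \frac{C_\mu}{1+C_\mu}\,\mu[k-1,\infty)
\]
for every $k\ge 1$. Iterating this inequality from $k$ down to $0$ and using $\mu[0,\infty)=1$ gives
\[
  \mu[k,\infty) \le \Bigl(\frac{C_\mu}{1+C_\mu}\Bigr)^k.
\]

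Finally I would sum the resulting geometric series:
\[
  \EE X \le \sum_{k=1}^\infty \Bigl(\frac{C_\mu}{1+C_\mu}\Bigr)^k = \frac{C_\mu/(1+C_\mu)}{1-C_\mu/(1+C_\mu)} = C_\mu,
\]
which is even slightly stronger than the claimed bound $1+C_\mu$. There is no real obstacle here; the only thing to watch is the case $C_\mu=\infty$, in which the bound is trivial, and the convention $c/0=\infty$ adopted just before the lemma handles the degenerate case $C_\mu=0$ (which cannot occur for a fully supported measure on infinite $\NN$ anyway).
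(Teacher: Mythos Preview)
Your proof is correct and follows essentially the same route as the paper: write $\EE X$ as a tail sum and invoke Lemma~\ref{L:rho_lower_bound}. The only difference is that the paper uses the pointwise bound~\eqref{eq:quotient_upper_bound}, i.e.\ $\mu[k,\infty)\le(1+C_\mu)\mu_k$, and sums directly to get $\EE X\le\sum_{k\ge0}\mu[k,\infty)\le(1+C_\mu)\sum_{k\ge0}\mu_k=1+C_\mu$, whereas you iterate the ratio bound~\eqref{eq:rho_lower_bound} to obtain geometric decay and then sum, which indeed yields the slightly sharper $\EE X\le C_\mu$.
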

\begin{proof}
	By Lemma~\ref{L:rho_lower_bound},
	$
		\EE X 
		\le
		\sum_{k\in\NN} \mu[k,\infty)
		\le 
		(1+C_\mu)
		\sum_{k\in\NN} \mu_k
		= 
		1+C_\mu
	$.
\end{proof}

\subsection{Tail estimates}
Proposition below states that the $p$-log-Sobolev inequality implies Poisson-type tail behavior for $p\in(0,1]$.
It is deduced by using a variant of Herbst's argument.
\begin{proposition}\label{P:poisson_tails}
If $\mu$ is fully supported and satisfies $p$-LS($C$)~\eqref{eq:pLS_birth_death} for some $p\in(0,1]$ and some $C<\infty$, then there exists $\varepsilon_p\colon (0,\infty) \to [0,1]$, such that $\varepsilon_p(t)\to 0$ as $t\to\infty$ and
\begin{equation}\label{eq:poisson_tails}
	\log\bigl(
		\mu[t,\infty)
	\bigr)
	\le 
	-(1-\varepsilon_p(t)) tp\log (t+1)
\end{equation}
for any $t\ge 0$.
\end{proposition}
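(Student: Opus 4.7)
The strategy is a Herbst-type argument applied to an exponential test function in the $p$-log-Sobolev inequality~\eqref{eq:pLS_birth_death}. Let $F(\lambda)=\EE_\mu e^{\lambda X}$ where $X\sim\mu$, and apply the inequality to $f(k)=e^{\lambda k}$. Because $f(k+1)/f(k)=e^{\lambda}$ is independent of $k$, the right-hand side of~\eqref{eq:pLS_birth_death} collapses to $\frac{C}{4}H_p(e^{\lambda})F(\lambda)$, while $\Ent_\mu(e^{\lambda X})=\lambda F'(\lambda)-F(\lambda)\log F(\lambda)$. Dividing by $\lambda^2 F(\lambda)$ and recognizing the left-hand side as $(\log F(\lambda)/\lambda)'$, I obtain
\[
  \phi'(\lambda)\le \frac{C\,H_p(e^{\lambda})}{4\lambda^2},
  \qquad
  \phi(\lambda):=\frac{\log F(\lambda)}{\lambda},
  \qquad
  \phi(0^{+})=\EE_\mu X,
\]
where the initial value is finite because $p$-LS implies the Poincar\'e inequality (Theorem~\ref{T:MOS} and Proposition~\ref{P:MOS-Poincare-equivalence}), so Lemma~\ref{L:poincare_implies_integrability} gives $\EE_\mu X<\infty$. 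The finiteness of $F(\lambda)$, not known a priori, I would handle by first applying $p$-LS to the bounded test functions $f_N(k)=e^{\lambda(k\wedge N)}$ and then passing $N\to\infty$ by monotone convergence.

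Next I would integrate the differential inequality and insert the asymptotics of $H_p$. A Taylor expansion of~\eqref{eq:H_p_def} gives $H_p(e^{s})=s^2+O_p(s^3)$ near $s=0$, so $H_p(e^{s})/s^2$ is integrable at the origin. For large $s$ the factor $e^{s/p}$ dominates both cases of~\eqref{eq:H_p_def} (in the $p\in(0,1)$ case because $p'<0$ and so $e^{s/p'}-1\to-1$), giving $H_p(e^s)\le c_p(1+s)e^{s/p}$. One integration by parts yields $\int_{0}^{\lambda}s^{-2}e^{s/p}\,ds\sim pe^{\lambda/p}/\lambda^2$ as $\lambda\to\infty$, whence for $\lambda$ large,
\[
  \log F(\lambda)\le \lambda\EE_\mu X + \tilde c_p\,(1+\lambda)\,\frac{e^{\lambda/p}}{\lambda}.
\]

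Finally, Markov's inequality gives $\log\mu[t,\infty)\le -\lambda t+\log F(\lambda)$ for every $\lambda>0$. Choosing $\lambda=p\log(t+1)$ makes $e^{\lambda/p}=t+1$, so the contribution of the exponential term is $O(t)$, whereas $-\lambda t$ produces the main term $-pt\log(t+1)$. Hence $\log\mu[t,\infty)\le -pt\log(t+1)(1+o(1))$, which is exactly~\eqref{eq:poisson_tails} with $\varepsilon_p(t)=O(1/\log(t+1))$. The main obstacle is bookkeeping: one has to track the $p$-dependence of the constants $c_p,\tilde c_p$ (which degenerate as $p\to 1^{-}$, where the extra linear factor in the asymptotic of $H_p(e^s)$ is genuinely present) and verify that the truncation bounds from step one are uniform in $N$ on compact subsets of $(0,\infty)$, so that $\varepsilon_p(t)\to 0$ for each fixed $p\in(0,1]$.
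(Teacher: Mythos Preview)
Your proposal is correct and follows essentially the same Herbst-type argument as the paper: apply $p$-LS to (truncated) exponential test functions to obtain a differential inequality for $\log F(\lambda)/\lambda$, integrate using the large-$s$ behaviour of $H_p(e^s)$, and plug $\lambda=p\log(t+1)$ into the Chernoff bound. The only noteworthy difference is the endpoint $p=1$: the paper does not bound $H_1(e^s)$ directly but instead invokes Theorem~\ref{T:MOS} to reduce to the case $p=\varphi(t)<1$ with $\varphi(t)\to 1$, whereas your uniform estimate $H_p(e^s)\le c_p(1+s)e^{s/p}$ (the linear factor being genuine only at $p=1$) treats both cases at once and in fact yields a slightly faster decay of $\varepsilon_1(t)$.
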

For the proof of Proposition~\ref{P:poisson_tails}, we need the following lemma.
\begin{lemma}\label{L:integral_est}
	For any $\lambda > 1$ and $p\in(0,1)$,
	\begin{equation}\label{eq:integral_est}
		\int_1^\lambda \frac{e^{s/p}-1}{s^2}\,ds 
		\le 
		\frac{p}{1-p}
		\cdot 
		\frac{e^{\lambda/p}}{\lambda}
		=
		-p'
		\cdot 
		\frac{e^{\lambda/p}}{\lambda}.
	\end{equation}
\end{lemma}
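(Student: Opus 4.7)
The plan is to bound the integrand by an exact derivative and invoke the fundamental theorem of calculus. Since $e^{s/p}-1\le e^{s/p}$, it suffices to control $\int_1^\lambda e^{s/p}/s^2\,ds$. The key observation is the identity
\[
\frac{d}{ds}\Bigl(\frac{e^{s/p}}{s}\Bigr)
= \frac{e^{s/p}(s-p)}{ps^2},
\]
which, combined with the elementary bound $s-p\ge 1-p>0$ valid on $[1,\infty)$ for $p\in(0,1)$, gives the pointwise estimate
\[
\frac{e^{s/p}}{s^2}
\le \frac{p}{1-p}\cdot \frac{d}{ds}\Bigl(\frac{e^{s/p}}{s}\Bigr).
\]

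Integrating this over $[1,\lambda]$ the right-hand side telescopes to $\frac{p}{1-p}\bigl(e^{\lambda/p}/\lambda - e^{1/p}\bigr)$, which is dominated by $\frac{p}{1-p}\cdot e^{\lambda/p}/\lambda$ since $e^{1/p}>0$. Combining with the initial trivial estimate yields the stated inequality, and the final equality in the claim follows from $p'=p/(p-1)=-p/(1-p)$. There is no real obstacle here; the only step requiring care is noting that the hypothesis $\lambda>1$ is precisely what guarantees the factor $s-p$ stays uniformly bounded below by $1-p$ throughout the integration range.
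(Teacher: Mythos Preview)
Your proof is correct and follows essentially the same route as the paper: bound $e^{s/p}-1$ by $e^{s/p}$, recognize that $\frac{d}{ds}\bigl(e^{s/p}/s\bigr)=\frac{(s-p)e^{s/p}}{ps^2}$ and use $s-p\ge 1-p$ on $[1,\lambda]$ to dominate the integrand by an exact derivative, then drop the positive term $e^{1/p}$ after integrating. One small remark: the bound $s-p\ge 1-p$ on the integration range comes from the lower limit being $1$, not from the hypothesis $\lambda>1$ (the latter merely ensures the interval is nondegenerate).
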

\begin{proof}
	For any $s>1$,
	\begin{equation*}
		\frac{1-p}{p}\cdot 
		\frac{e^{s/p}-1}{s^2}
		\le 
		\frac{1-p}{p}
		\cdot 
		\frac{e^{s / p}}{s^2}
		\le 
		\frac{s-p}{p}
		\cdot 
		\frac{e^{s / p}}{s^2}
		=
		\frac{d}{ds}\Bigl[
			\frac{e^{s/p}}{s}
		\Bigr],
	\end{equation*}
	whence
	\[
		\int_1^\lambda 
		\frac{e^{s/p}-1}{s^2}	
		\,ds 
		\le 
		\frac{p}{1-p}
		\Bigl(
			\frac{e^{\lambda/p}}{\lambda}
			- 
			e^{1/p}
		\Bigr)
		\le 
		\frac{p}{1-p}
		\frac{e^{\lambda/p}}{\lambda}
	\]
	as desired.
\end{proof}
\begin{proof}[Proof of Proposition~\ref{P:poisson_tails}]
Consider first the case $p\in(0,1)$.
For $s, h>0$, $x\in\RR$, denote $\phi_h(x) = \min(x,h)$, $f_{s,h}(x)=\exp(s \phi_h(x)/p)$ and $g_{s, h}(x) = \exp(s \phi_h(x)/p')$.
Let $X\sim\mu$ and for $h>0$, set $X_h = \min(X,h)$.
For any  $s,h>0$, $h\in\NN$, applying $p$-LS($C$) to $f_{s,h}$	 and using the Dirichlet form formula~\eqref{eq:calE_birth-death}, we obtain
\begin{align}\label{eq:X_exp_integrability_derivative}
\begin{split}
	\frac{d}{ds} \Bigl[
		\frac{\log \EE e^{s X_h}}{s}
	\Bigr]
	&= 
	\frac{\Ent (e^{s X_h})}{s^2 \EE e^{s X_h}}
	\\&\le 
	\frac{Cpp'}{4s^2 \EE e^{s X_h}}
	\calE(f_{s, h}, g_{s, h})
	\\&=
	\frac{Cpp'}{4s^2 \EE e^{s X_h}}
	\sum_{k=0}^{h-1} 
	\bigl(
		e^{s(k+1)/p}
		-
		e^{sk/p}
	\bigr)
	\bigl(
		e^{s(k+1)/p'}
		-
		e^{sk/p'}
	\bigr)\mu_k
	\\&=
	\frac{Cpp'}{4s^2}
	\bigl(
		e^{s/p} - 1
	\bigr)
	\bigl(
		e^{s/p'} - 1
	\bigr)
	\frac{\sum_{k=0}^{h-1}
	e^{sk/p + sk/p'}\mu_k}{\EE e^{s X_h}}
	\\&\le 
	\frac{Cpp'}{4s^2}
	\bigl(
		e^{s/p} - 1
	\bigr)
	\bigl(
		e^{s/p'} - 1
	\bigr),
\end{split}
\end{align}
where in the last inequality we have also used that $p'(e^{s/p'} - 1) > 0$.
By Theorem~\ref{T:MOS}, $\mu$ satisfies the Poincar\'{e} inequality~\eqref{eq:poincare_N}, whence by Lemma~\ref{L:poincare_implies_integrability}, $\EE X < 1+C_\mu < \infty$, cf. Remark~\ref{rk:hardy-poincare}.
Consequently, for any $\lambda,h>0$,
\begin{align}
\begin{split}\label{eq:X_exp_integrability}
	\frac{\log \EE e^{\lambda X_h}}{\lambda}
	&=
	\EE X_h 
	+
	\int_0^\lambda 
	\frac{d}{ds}\bigl[
		\frac{\log \EE e^{sX_h}}{s}
	\bigr]\,ds 
	\\&\le 
	\EE X_h 
	+
	\frac{Cpp'}{4}
	\int_0^\lambda 
	\frac{
		\bigl(
			e^{s/p} - 1
		\bigr)
		\bigl(
			e^{s/p'} - 1
		\bigr)
	}{s^2}
	\,ds 
	\\&\le 
	\EE X
	+
	\frac{Cpp'}{4}
	\int_0^\lambda 
	\frac{
		\bigl(
			e^{s/p} - 1
		\bigr)
		\bigl(
			e^{s/p'} - 1
		\bigr)
	}{s^2}
	\,ds
	< \infty.
\end{split}
\end{align}
Taking $h\to +\infty$ and using monotone convergence theorem in~\eqref{eq:X_exp_integrability}, we obtain that for any $\lambda > 0$, $\EE e^{\lambda X}<\infty$.
Therefore, we can repeat reasoning from~\eqref{eq:X_exp_integrability_derivative} to get that for any $s>0$,
\begin{equation}\label{eq:herbst}
	\frac{d}{ds} \Bigl[
		\frac{\log \EE e^{s X}}{s}
	\Bigr]
	\le 
	\frac{Cpp'}{4s^2}
	\bigl(
		e^{s/p} - 1
	\bigr)
	\bigl(
		e^{s/p'} - 1
	\bigr).
\end{equation}
For any $\lambda>1$, by~\eqref{eq:herbst}, estimating $1-e^{s/p'}\le 1$ for $s>1$ and using Lemma~\ref{L:integral_est},
\begin{align*}
	\int_1^\lambda
	\frac{d}{ds}
	\bigl[
		\frac{\log \EE e^{s X}}{s}
	\bigr]\,ds
	&\le 
	\frac{C pp'}{4}
	\int_1^\lambda
	\frac{(e^{s/p}-1)(e^{s/p'}-1)}{s^2}
	\,ds 
	\\&\le 
	\frac{C p(-p')}{4}
	\int_1^\lambda
	\frac{e^{s/p}-1}{s^2}
	\,ds 
	\le
	\frac{C pp'^2}{4\lambda}
	e^{\lambda/p},
\end{align*}
whence, for $t>e^{1/p}-1$, by the Chernoff bound and setting $\lambda = p\log (t+1)>1$,
\begin{align*}
	\log\bigl(
		\PP(X \ge t)		
	\bigr)
	&\le 
	\log \EE 
	\exp(-\lambda t + \lambda X)
	\\
	&=
		-\lambda t 
		+ 
		\lambda \log \EE e^X 
		+
		\lambda
		\int_1^\lambda
		\frac{d}{ds}
		\bigl[
			\frac{\log \EE e^{s X}}{s}
		\bigr]\,ds
	\\
	&\le
		-\lambda t 
		+ 
		\lambda \log \EE e^X 
		+
		\frac{C pp'^2}{4}
		e^{\lambda/p}
	\\
	&=
	- pt\log (t+1)
	+ p (\log \EE e^X)\log (t+1) 
	+
	\frac{C pp'^2}{4}(t+1) 
	\\
	&=
	\notag
	-\Bigl(
		1 - \frac{\log\EE e^X}{t} - \frac{Cp'^2(t+1)}{4t\log(t+1)}
	\Bigr)tp \log (t+1).
\end{align*}
Therefore, for any $p\in(0,1)$ and 
\begin{equation}\label{eq:varepsilon_p_def}
	\varepsilon_p(t)
	=
	\begin{cases}
		1 &\text{if}\quad t\le e^{1/p}-1,\\
		\min\bigl(
			1,
			\frac{\log\EE e^X}{t} 
			+
			\frac{C}{4}
			\cdot
			\frac{1}{(1/p-1)^2}
			\cdot
			\frac{t+1}{t\log(t+1)}
		\bigr)
		&\text{if}\quad t> e^{1/p}-1,
	\end{cases}
\end{equation}
the estimate~\eqref{eq:poisson_tails} holds for any $t\ge 0$ as desired.

We turn to the case $p=1$.
Recall that by Theorem~\ref{T:MOS}, for any $p\in(0,1)$ the 1-log-Sobolev inequality implies the $p$-log-Sobolev inequality with the same constant.
Let $\varepsilon_p$ be given by~\eqref{eq:varepsilon_p_def} and let $\varphi\colon (0,\infty)\to (0,1)$ be such that $\varphi(t)\to 1$ and $\varepsilon_{\varphi(t)}(t)\to 0$ as $t\to\infty$ (one can take, e.g., $\varphi(t)=0.5$ for $t\le e^e$ and $\varphi(t) = (1+1/\log\log t)^{-1}$ for $t>e^e$).
For $t\in(0,\infty)$, set 
\[ 
	\varepsilon_1(t) 
	= 
	1 -
	\varphi(t)
	\bigl(
		1 
		- 
		\varepsilon_{\varphi(t)}(t)
	\bigr).
\]
Then $\varepsilon_1(t)\to 0$ as $t\to\infty$ and for any $t> 0$, we apply~\eqref{eq:poisson_tails} with $p=\varphi(t)$ to obtain the conclusion.
\end{proof}

\subsection{Reduction to increasing functions}
The proposition below is an adaptation of \cite[Proposition~3]{bartheroberto} to the discrete case.
The proof goes along similar lines -- we present it for completeness.
\begin{proposition}\label{P:Barthe-Roberto}
	Recall the definition~\eqref{eq:C_mu} of $C_\mu$ and assume $C_\mu<\infty$.
	For any $f\colon\NN\to\RR_+$ with a finite number of jumps and for any $\rho>1$,
	\[
		\Ent_\mu(f) 
		\le 
		16C_\mu(1+\sqrt{\rho})^2
		\calE(\sqrt{f}, \sqrt{f})
		+
		\sum_{k\colon g(k)\ge \rho \EE_\mu g}
		g(k)\log\bigl(\frac{g(k)}{\EE_\mu g}\bigr)\mu_k,
	\]
	where
	$
		g(n)
		=
		f(0)
		+
		\sum_{k=0}^{n-1}(Df(k))_+.
	$
\end{proposition}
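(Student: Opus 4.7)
The plan is to use the variational bound $\Ent_\mu(f)\le \int \Phi(f,c)\,d\mu$, where $\Phi(x,c)=x\log(x/c)-x+c$ and $c>0$ is arbitrary, applied with $c=\EE_\mu g$, and to split the integral over $\NN$ according to the set $A=\{k\colon g(k)\ge \rho\EE_\mu g\}$. First I would observe that $g$ is non-decreasing, satisfies $g\ge f$ pointwise, $g(0)=f(0)$, and $Dg=(Df)_+$. Combined with the identity $(D\sqrt{h}(k))^2=(Dh(k))^2/(\sqrt{h(k+1)}+\sqrt{h(k)})^2$ applied to $h=f$ and $h=g$, this yields the comparison $\calE(\sqrt{g},\sqrt{g})\le \calE(\sqrt{f},\sqrt{f})$, which will be crucial for transferring Hardy estimates for $\sqrt{g}$ back to $\calE(\sqrt{f},\sqrt{f})$.

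Next I would invoke the elementary inequality $y\log y - y + 1\le (y-1)^2$, valid for all $y>0$ (it reduces to $\log y\le y-1$); restricted to $y\in(0,\rho]$ it gives $\Phi(f,\EE_\mu g)\le (1+\sqrt{\rho})^2(\sqrt{f}-\sqrt{\EE_\mu g})^2$ whenever $f\le \rho\EE_\mu g$. I would then split $\NN=B\sqcup B^c$ with $B^c=A\cap\{f\ge \EE_\mu g\}$. On $B$ the condition $f\le \rho\EE_\mu g$ automatically holds (either because $g<\rho\EE_\mu g$ and $f\le g$, or because $f<\EE_\mu g\le \rho\EE_\mu g$), so the elementary inequality applies. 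On $B^c$ one has $f,g\ge \EE_\mu g$, and the monotonicity of $x\mapsto \Phi(x,\EE_\mu g)$ on $[\EE_\mu g,\infty)$ together with $f\le g$ gives $\Phi(f,\EE_\mu g)\le \Phi(g,\EE_\mu g)\le g\log(g/\EE_\mu g)$; integrating over $B^c$ and extending to all of $A$ (where $g\log(g/\EE_\mu g)\ge 0$) yields the tail term in the proposition.

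The remaining task is to bound $\int(\sqrt{f}-\sqrt{\EE_\mu g})^2\,d\mu$ by a multiple of $\calE(\sqrt{f},\sqrt{f})$. I would decompose
\[
	(\sqrt{f(k)}-\sqrt{\EE_\mu g})^2
	\le
	2(\sqrt{f(k)}-\sqrt{f(0)})^2
	+
	2(\sqrt{f(0)}-\sqrt{\EE_\mu g})^2,
\]
apply Miclo's Hardy inequality (Theorem~\ref{T:Miclo}) to $\sqrt{f}$ to obtain $\int(\sqrt{f}-\sqrt{f(0)})^2\,d\mu\le 4C_\mu\calE(\sqrt{f},\sqrt{f})$, and bound the deterministic term via Jensen's inequality $\sqrt{\EE_\mu g}\ge \EE_\mu\sqrt{g}$ together with $g(0)=f(0)$, which implies $(\sqrt{\EE_\mu g}-\sqrt{f(0)})^2\le \int(\sqrt{g}-\sqrt{f(0)})^2\,d\mu$; Hardy applied to $\sqrt{g}$ combined with $\calE(\sqrt{g},\sqrt{g})\le \calE(\sqrt{f},\sqrt{f})$ contributes another $4C_\mu\calE(\sqrt{f},\sqrt{f})$. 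Summing with the factors of $2$ yields $16C_\mu\calE(\sqrt{f},\sqrt{f})$, and multiplying by the $(1+\sqrt{\rho})^2$ prefactor and adding the tail sum over $A$ produces exactly the bound claimed in the proposition.

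The main delicate point is arranging the decomposition so that the problematic subset $A\cap\{f<\EE_\mu g\}$ -- on which the tail-type bound $\Phi(f,\EE_\mu g)\le g\log(g/\EE_\mu g)$ could fail when $\rho$ is close to $1$ -- is absorbed into the bulk side $B$ rather than the tail side. This is precisely what keeps the constant bounded uniformly for all $\rho>1$ without an extra blow-up as $\rho\to 1^+$.
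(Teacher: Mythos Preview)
Your proof is correct and follows the same overall strategy as the paper: the variational bound $\Ent_\mu(f)\le\EE_\mu\psi(\EE_\mu g,f)$ with $\psi(t,x)=x\log(x/t)-x+t$, the elementary estimate $\psi(t,x)\le(1+\sqrt{\rho})^2(\sqrt{x}-\sqrt{t})^2$ on the bulk, monotonicity on the tail, and then Miclo's Hardy inequality to pass to $\calE(\sqrt{f},\sqrt{f})$.

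The execution differs in two places. First, the paper splits simply on $\Theta=\{f\ge\rho\EE_\mu g\}$, whereas you split on $B^c=\{g\ge\rho\EE_\mu g\}\cap\{f\ge\EE_\mu g\}$; both land on the same tail sum, but the paper's split is a bit more direct. Second, and more interestingly, the paper decomposes $(\sqrt{f}-\sqrt{\EE_\mu g})^2$ through $\sqrt{g}$, applies Hardy to $\sqrt{f}-\sqrt{g}$ and the Poincar\'e inequality to $\sqrt{g}$, and then needs the pointwise inequality $4(D\sqrt{f}-D\sqrt{g})^2+8(D\sqrt{g})^2\le 8(D\sqrt{f})^2$ (justified via $0\le D\sqrt{g}\le(D\sqrt{f})_+$ and a convexity argument). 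You instead decompose through $\sqrt{f(0)}$, apply Hardy separately to $\sqrt{f}$ and to $\sqrt{g}$, and reduce the comparison of Dirichlet forms to the cleaner pointwise bound $(D\sqrt{g})^2\le(D\sqrt{f})^2$, which follows immediately from $Dg=(Df)_+$ and $g\ge f$. Your Jensen step $(\sqrt{\EE_\mu g}-\sqrt{f(0)})^2\le\int(\sqrt{g}-\sqrt{f(0)})^2\,d\mu$ is correct (expand both sides; the difference is $2\sqrt{f(0)}(\sqrt{\EE_\mu g}-\EE_\mu\sqrt{g})\ge 0$), though it would benefit from being spelled out. Both routes yield the same constant $16C_\mu(1+\sqrt{\rho})^2$; your Dirichlet-form comparison is arguably the simpler of the two.
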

\begin{proof}
We use the assumption that $f$ has a finite number of jumps to make sure that every quantity below is well-defined.
Note that $g\ge f$ since $(Df(k))_+\ge Df(k)$ for all $k\in\NN$.
For $x,t>0$, denote $\psi(t,x)=x\log\frac{x}{t} - (x-t)$, $\phi(x)=x\log x$ and set 
$\Theta = \{ n\in \NN \colon f(n) \ge \rho \EE g \}$.
Convexity of $\phi$ implies that $\phi(\EE_\mu f) \ge \phi(\EE_\mu g) + \phi'(\EE_\mu g)(\EE_\mu f - \EE_\mu g)$, so that 
\begin{align}\label{eq:BR_ent_split}
	\begin{split}
	\Ent(f) 
	&= 
	\EE_\mu \bigl[
		\phi(f) - \phi(\EE_\mu f)
	\bigr]
	\\
	&\le 
	\EE_\mu\bigl[
		\phi(f)-\phi(\EE_\mu g) + \phi'(\EE_\mu g)\cdot (\EE_\mu g - \EE_\mu f)
	\bigr]
	= 
	\EE_\mu \psi(\EE_\mu g, f).
	\end{split}
\end{align}
Set $U_\rho=(1+\sqrt{\rho})^2$.
Since for $x\in [0,\rho t]$, 
\begin{align*}
	\psi(t,x)
	\le 
	x\Bigl(\frac{x}{t}-1\Bigr)
	-
	(x-t)
	&= 
	t\Bigl( \frac{x}{t}-1 \Bigr)^2
	\\&=
	\Bigl( 1+\sqrt{\frac{x}{t}} \Bigr)^2
	(\sqrt{x}-\sqrt{t})^2
	\le 
	U_\rho (\sqrt{x}-\sqrt{t})^2,
\end{align*}
we obtain
\begin{align}\label{eq:entropy_bound_small_f}
\begin{split}
	\EE_\mu \psi(\EE_\mu g, f){\bf 1}_{\Theta^c}
	&\le 
	U_\rho 
	\EE_\mu (\sqrt{f}-\sqrt{\EE_\mu g})^2
	\\&\le 
	2
	U_\rho 
	\bigl(
		\EE_\mu (\sqrt{f}-\sqrt{g})^2
		+
		\EE_\mu (\sqrt{g}-\sqrt{\EE_\mu g})^2
	\bigr)
	\\&\le 
	2
	U_\rho 
	\bigl(
		\EE_\mu (\sqrt{f}-\sqrt{g})^2
		+
		2\Var_\mu(\sqrt{g})
	\bigr).
\end{split}
\end{align}
Let $\hat{C}_H$ and $\hat{C}_P$ be the best constants in the Hardy inequality~\eqref{eq:Hardy} and the Poincar\'{e} inequality~\eqref{eq:poincare_N} respectively, cf. the definitions~\eqref{eq:C_H_best} and~\eqref{eq:C_P_best}.
By Theorem~\ref{T:Miclo} and as indicated in Remark~\ref{rk:hardy-poincare}, $\hat{C}_P \le 2\hat{C}_H\le 8C_\mu$.
Recall that $\sqrt{f(0)}=\sqrt{g(0)}$, whence the RHS of~\eqref{eq:entropy_bound_small_f} can be estimated from above by 
\begin{equation*}	
	2U_\rho
	\bigl(
		4C_\mu \sum_{k=0}^\infty (D\sqrt{f}-D\sqrt{g})^2(k)\mu_k
		+
		8C_\mu \sum_{k=0}^\infty (D\sqrt{g})^2(k)\mu_k
	\bigr)
	\le 
	16C_\mu U_\rho
	\calE(\sqrt{f}, \sqrt{f}),
\end{equation*}
where we have used the fact that $4(D\sqrt{f}-D\sqrt{g})^2+8(D\sqrt{g})^2\le 8(D\sqrt{f})^2$, which follows from the pointwise estimate $0\le D\sqrt{g}\le (D\sqrt{f})_+$ and the fact that for a fixed $y$, the convex mapping $x\mapsto 4(y-x)^2 + 8x^2$ on a closed interval admits a maximum at an end of this interval.

Turning to the remaining part of the RHS of~\eqref{eq:BR_ent_split}, we get 
\begin{align*}
	\EE_\mu \psi(\EE g, f){\bf 1}_\Theta
	&= 
	\EE_\mu
	\bigl[f\log \frac{f}{\EE_\mu g} 
	-
	(f-\EE_\mu g)
	\bigr]{\bf 1}_\Theta
	\\ 
	&\le 
	\EE_\mu \bigl[g \log\frac{g}{\EE_\mu g} \bigr]{\bf 1}_\Theta
	\le
	\sum_{k\colon g(k)\ge \rho \EE_\mu g} 
	g(k)\log\frac{g(k)}{\EE_\mu g}\mu_k,
\end{align*}
where in both inequalities we have used the definition of $\Theta$ and the facts that $f\le g$ and $\rho>1$.
Combining all the above estimates yields the conclusion.
\end{proof}

When proving that some $p$-log-Sobolev inequality is satisfied, Proposition~\ref{P:Barthe-Roberto} allows us to restrict our attention to a special subclass of functions from $\calH_+$.
This idea is formalized in the corollary below.

\begin{corollary}\label{C:Barthe-Roberto}
	If $C_\mu<\infty$ and
	\begin{equation}\label{eq:Barthe-Roberto_cond}
		\sum_{k\colon g(k)\ge \rho \EE_\mu g}
		g(k)\log\bigl(\frac{g(k)}{\EE_\mu g}\bigr)\mu_k
		\le 
		C_\rho
		\calE_p(g)
	\end{equation}
	for some $\rho>1$, $p\in(0,2]$, some constant $C_\rho>0$ and any non-decreasing function $g\colon\NN\to\RR_+$ with a finite number of jumps,
	then $\mu$ satisfies the $p$-log-Sobolev inequality~\eqref{eq:pLS_birth_death} with constant 
	\[
		C=64C_\mu(1+\sqrt{\rho})^2+4C_\rho.
	\]
\end{corollary}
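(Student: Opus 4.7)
The plan is to apply Proposition~\ref{P:Barthe-Roberto} to split $\Ent_\mu(f)$ into a diffusive term involving $\calE(\sqrt f,\sqrt f)$ and a tail term involving the non-decreasing rearrangement $g$, and then control each term by $\calE_p(f)$. By the remark after~\eqref{eq:calE_birth-death}, it suffices to verify~\eqref{eq:pLS_birth_death} for $f\in\calH_+$ with finitely many jumps; for such $f$ the auxiliary $g(n)=f(0)+\sum_{k=0}^{n-1}(Df(k))_+$ is non-decreasing with finitely many jumps, so that~\eqref{eq:Barthe-Roberto_cond} may legitimately be applied to it.

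Combining Proposition~\ref{P:Barthe-Roberto} with~\eqref{eq:Barthe-Roberto_cond} yields
\[
\Ent_\mu(f)\le 16C_\mu(1+\sqrt\rho)^2\,\calE(\sqrt f,\sqrt f)+C_\rho\,\calE_p(g).
\]
Since $\calE_2(f)=4\calE(\sqrt f,\sqrt f)$ by~\eqref{eq:calep_def} and $q\mapsto \calE_q(f)$ is non-increasing on $(0,2]$ by Proposition~\ref{P:MOS_main}, we have $4\calE(\sqrt f,\sqrt f)\le \calE_p(f)$, so the first term is bounded by $4C_\mu(1+\sqrt\rho)^2\calE_p(f)$. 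Once the pointwise comparison $\calE_p(g)\le \calE_p(f)$ is established, we conclude
\[
\Ent_\mu(f)\le\bigl[4C_\mu(1+\sqrt\rho)^2+C_\rho\bigr]\calE_p(f)\le \tfrac14\bigl(64C_\mu(1+\sqrt\rho)^2+4C_\rho\bigr)\calE_p(f),
\]
which is the required conclusion.

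The main obstacle is therefore to establish the comparison $\calE_p(g)\le \calE_p(f)$. Using the identity $\calE_p(h)=\sum_k h(k)H_p(h(k+1)/h(k))\mu_k$ (cf.~\eqref{eq:calep_birth_death}, together with its analogue for $p\in(1,2]$ derived from~\eqref{eq:calep_def}), we compare the two sums term by term. At indices $k$ with $Df(k)\le 0$, $Dg(k)=0$ and $H_p(1)=0$, so the $k$-th term in $\calE_p(g)$ vanishes while that in $\calE_p(f)$ is non-negative. At indices with $Df(k)=:\delta>0$, the definition of $g$ gives $g(k+1)-g(k)=\delta$ and $g(k)\ge f(k)$, whence the desired termwise estimate reduces to showing that for each fixed $\delta>0$ the map $t\mapsto tH_p(1+\delta/t)$ is non-increasing on $(0,\infty)$. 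The substitution $u=\delta/t$ converts this into the monotonicity of $u\mapsto H_p(1+u)/u$ on $(0,\infty)$; since $H_p(1)=0$, this ratio is the slope of the secant from $(1,0)$ to $(1+u,H_p(1+u))$, and is therefore non-decreasing in $u$ whenever $H_p$ is convex. Convexity of $H_p$ on $(0,\infty)$ for any $p\in(0,2]$ is a short direct second-derivative computation. Summing over $k$ yields $\calE_p(g)\le \calE_p(f)$, completing the argument.
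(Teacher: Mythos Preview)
Your proof is correct and follows essentially the same route as the paper: apply Proposition~\ref{P:Barthe-Roberto}, bound the diffusive term via Proposition~\ref{P:MOS_main}, and compare $\calE_p(g)\le\calE_p(f)$ termwise using that $t\mapsto tH_p(1+\delta/t)$ is non-increasing (which both you and the paper deduce from convexity of $H_p$ together with $H_p(1)=0$). The only minor difference is that you use the identity $\calE_2(f)=4\calE(\sqrt f,\sqrt f)$ exactly, which yields the sharper intermediate bound $4C_\mu(1+\sqrt\rho)^2\calE_p(f)$, whereas the paper writes the cruder $\calE(\sqrt f,\sqrt f)\le\calE_2(f)$ and so carries the constant $16C_\mu(1+\sqrt\rho)^2$; you then discard this gain to match the stated constant.
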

\begin{proof}
	By applying Proposition~\ref{P:Barthe-Roberto} together with condition~\eqref{eq:Barthe-Roberto_cond}, we get that for any $f\colon\NN\to\RR_+$ with a finite number of jumps
	\begin{equation}\label{eq:f-g-replacement}
	\Ent_\mu(f) 
	\le 
	16C_\mu(1+\sqrt{\rho})^2
	\calE(\sqrt{f}, \sqrt{f})
	+
	C_\rho \calE_p(g),
	\end{equation}
	where $g(0)=f(0)$ and $g(n) = f(0) + \sum_{k=0}^{n-1} (Df(k))_+$ for $n\ge 1$.

	Note that $f\le g$ by definition and that for any $x, \Delta > 0$, the mapping $x\mapsto x H_p(1+\Delta/x)$ is non-increasing, which follows from the convexity of $H_p$ on $[1,\infty)$, cf. Lemma~\ref{L:H_p_properties},~\ref{L:H_p_monotone} below, and the fact that $H_p(1)=0$.
	Using that $H_p\ge 0$ and monotonicity of $x\mapsto x H_p(1+\Delta/x)$, we obtain
	\begin{align}\label{eq:calef-caleg-comp}
	\begin{split}
		\calE_p(f)
		&= 
		\sum_{k=0}^\infty f(k) H_p\Bigl( 1 + \frac{Df(k)}{f(k)} \Bigr)
		\mu_k 
		\\&\ge 
		\sum_{k=0}^\infty f(k) H_p\Bigl( 1 + \frac{Df(k)}{f(k)} \Bigr)
		\mu_k 
		{\bf 1}_{\{ Df(k) > 0 \}}
		\\&\ge 
		\sum_{k=0}^\infty g(k) H_p\Bigl( 1 + \frac{Df(k)}{g(k)} \Bigr)
		\mu_k 
		{\bf 1}_{\{ Df(k) > 0 \}}
		\\&=
		\sum_{k=0}^\infty g(k) H_p\Bigl( 1 + \frac{Dg(k)}{g(k)} \Bigr)
		\mu_k 
		=
		\calE_p(g).
 	\end{split}
	\end{align}

	By Proposition~\ref{P:MOS_main}, $\calE(\sqrt{f}, \sqrt{f}) \le \calE_2(f) \le \calE_p(f)$ for any $p\in (0,2]$.
	Therefore, combining~\eqref{eq:f-g-replacement} and~\eqref{eq:calef-caleg-comp}, we obtain that for any $f\colon\NN\to\RR_+$ with a finite number of jumps,
	\begin{equation}\label{eq:Barthe-Roberto-final-step}
		\Ent_\mu(f) 
		\le 
		\bigl(
		16C_\mu(1+\sqrt{\rho})^2
		+
		C_\rho
		\bigr)
		\calE_p(f).
	\end{equation}
	We conclude the result for arbitrary $f$ by the monotone convergence theorem (applied to the RHS of~\eqref{eq:Barthe-Roberto-final-step}) and Fatou's lemma (applied to the LHS of~\eqref{eq:Barthe-Roberto-final-step}).
\end{proof}

\subsection{Technical lemmas}
Recall the definition~\eqref{eq:H_p_def} of $H_p$ and note that for $p\in(0,1)$ and $x>0$, $H_p(x)=pp'(x-x^{1/p}-x^{1/p'}+1)$.
\begin{lemma}\label{L:H_p_properties}
	For any $p\in(0,1]$, the following properties are true:
	\begin{enumerate}[label=(\roman*)]
		\item \label{L:H_p_monotone} 
		$H_p$ is increasing and convex on $[1,\infty)$;
		\item \label{L:H_p_lower_bound}
		$H_p(x)\ge (\log x)^2$ for $x\ge 1$.
	\end{enumerate}
	If additionally $x\ge \lambda$ for some $\lambda>1$, then
	\begin{enumerate}[resume, label=(\roman*)]
		\item $H_p(x) \ge (x \log x)
		\cdot \min\bigl\{
				\frac{H_p(\lambda)}{\lambda\log\lambda},
				\frac{H'_p(\lambda)}{1+\log\lambda},
				1
			\bigr\}$;
		\label{L:H_p_xlogx}
		\item $H_p(x) \ge H_p(xc) \cdot 
		\min\bigl\{
			\frac{H_p(\lambda)}{H_p(\lambda c)},
			\frac{H_p'(\lambda)}{cH_p'(\lambda c)},
			c^{-1/p}
		\bigr\}$
		for any $c>1$;
		\label{L:H_p_cx}
		\item $
		\frac{H_p(y)}{y}
		\le
		\frac{\lambda}{\lambda-1}
		\frac{H_p(x)}{x}
		$
		for any $\lambda \le y \le x$.
		\label{L:H_p_c_increasing}
	\end{enumerate}
\end{lemma}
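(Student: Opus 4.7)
The plan is to exploit the closed-form expression $H_p(x) = pp'(x - x^{1/p} - x^{1/p'} + 1)$ for $p \in (0,1)$ (together with $H_1(x) = (x-1)\log x$) and the simple formulas for $H_p'$ and $H_p''$ to reduce all five claims to elementary second-derivative analysis. For (i), I would compute $H_p''(x) = x^{1/p-2} + x^{1/p'-2} > 0$ on $(0,\infty)$ (and $H_1''(x) = 1/x + 1/x^2 > 0$), giving convexity; since $H_p'(1) = pp' - p' - p = 0$ (using $p+p'=pp'$), monotonicity on $[1,\infty)$ follows. For (ii), I would expand $x^{1/p} + x^{1/p'} - x - 1$ via $x^a = e^{a\log x}$ with $a = 1/p$, $b = 1/p'$ (so $a+b=1$, $ab = 1/(pp') < 0$):
\[
x^a + x^b - x - 1 = \sum_{n \ge 2} \frac{a^n + b^n - 1}{n!}(\log x)^n.
\]
The $n=2$ coefficient equals $-ab$, and $a^n + b^n \ge 1$ for all $n \ge 2$ is checked by writing $a = 1+s$, $b = -s$ with $s = a-1 > 0$ and expanding $(1+s)^n \pm s^n$ binomially. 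Since $\log x \ge 0$ for $x \ge 1$, every term is non-negative, yielding $|pp'|(x^{1/p} + x^{1/p'} - x - 1) \ge (\log x)^2$, i.e., $H_p(x) \ge (\log x)^2$; the case $p=1$ reduces to $x - 1 \ge \log x$.

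Parts (iii) and (iv) admit a unified approach: set $\phi(x) = H_p(x) - M \cdot G(x)$ with $G(x) = x\log x$ in (iii) and $G(x) = H_p(xc)$ in (iv), where $M$ denotes the minimum from the statement. By construction, the first two entries of the minimum force $\phi(\lambda) \ge 0$ and $\phi'(\lambda) \ge 0$, while the third entry is tailored so that $\phi''(x) \ge 0$ on $[\lambda,\infty)$: in (iii), $\phi''(x) = H_p''(x) - M/x \ge (1-M)/x \ge 0$ since $H_p''(x) \ge 1/x$ on $[1,\infty)$ (using $1/p - 2 \ge -1$) and $M \le 1$; in (iv), $\phi''(x) = (1 - Mc^{1/p}) x^{1/p-2} + (1 - Mc^{1/p'}) x^{1/p'-2}$, where the first coefficient is $\ge 0$ because $M \le c^{-1/p}$ and the second because $c^{1/p'} < 1$ (as $1/p' < 0$, $c > 1$) and $M < 1$. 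Integrating twice from $\lambda$ then gives $\phi \ge 0$, and the analogous computation (with $1/x + 1/x^2$ in place of $H_p''$) handles $p = 1$.

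For (v), I would take $\phi(x) = H_p(x) - (x/y)\cdot((\lambda-1)/\lambda) H_p(y)$ on $[y,\infty)$; then $\phi(y) = H_p(y)/\lambda \ge 0$. The crux is the pointwise bound $H_p'(x) \ge H_p(x)/x$ on $[1,\infty)$, which for $p \in (0,1)$ follows from the direct identity $H_p'(x) - H_p(x)/x = |pp'|/x > 0$ and for $p=1$ from $(x-1+\log x)/x \ge 0$. Combined with $y \ge \lambda > 1$, this gives $\phi'(y) \ge H_p(y)/y - ((\lambda-1)/\lambda) H_p(y)/y = H_p(y)/(\lambda y) \ge 0$; convexity of $H_p$ then propagates $\phi'(x) \ge \phi'(y) \ge 0$ to all $x \ge y$, so $\phi$ is non-decreasing on $[y,\infty)$ and hence non-negative, which rearranges to the claim. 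The main technical obstacle is (ii), specifically verifying $a^n + b^n \ge 1$ for every $n \ge 2$ when $a > 1$, $b < 0$, $a+b=1$; the remaining parts all reduce to the $\phi$-function convexity argument sketched above.
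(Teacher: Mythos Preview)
Your treatment of (i), (iii), and (iv) matches the paper's almost exactly: compute $H_p''(x)=x^{1/p-2}+x^{1/p'-2}$ (resp.\ $1/x+1/x^2$ for $p=1$), check the initial conditions at $\lambda$, and integrate twice. Nothing to add there.

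For (ii) your power-series argument is correct but more laborious than the paper's route. The paper simply compares second derivatives: with $h(x)=(\log x)^2$ one has $H_p(1)=H_p'(1)=h(1)=h'(1)=0$, and by AM--GM
\[
H_p''(x)=\frac{x^{1/p}+x^{1/p'}}{x^2}\ge \frac{2x^{1/2}}{x^2}\ge \frac{2}{x^2}\ge \frac{2(1-\log x)}{x^2}=h''(x),
\]
which yields (ii) in one line and gives (i) for free. Your series approach works but requires the auxiliary combinatorial check $a^n+b^n\ge 1$.

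For (v) there is a slip: your claimed identity $H_p'(x)-H_p(x)/x=|pp'|/x$ is false (at $x=1$ the left side vanishes while the right does not; in fact $xH_p'(x)-H_p(x)=p(x^{1/p}-1)+p'(x^{1/p'}-1)$, which is non-constant). The inequality $H_p'(x)\ge H_p(x)/x$ you actually need is still true and follows immediately from convexity and $H_p(1)=0$ via $H_p'(x)(x-1)\ge H_p(x)$, so your $\phi$-argument can be repaired. That said, the paper dispatches (v) in three inequalities using only convexity and $H_p(1)=0$:
\[
\frac{H_p(y)}{y}\le\frac{H_p(y)}{y-1}\le\frac{H_p(x)}{x-1}\le\frac{\lambda}{\lambda-1}\,\frac{H_p(x)}{x},
\]
where the middle step is monotonicity of the secant slope from $(1,0)$. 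This is both shorter and avoids the derivative comparison entirely.
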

\begin{proof}
	We start with~\ref{L:H_p_monotone} and~\ref{L:H_p_lower_bound}.
	
	For $p=1$, $H_1(x)=(x-1)\log x$, $H_1(1) = H_1'(1)=0$ and $H_1''(x) = \frac{1}{x^2} + \frac{1}{x} > 0$, yielding~\ref{L:H_p_monotone}, while \ref{L:H_p_lower_bound} follows immediately as $\log x < x-1$.
	
	For $p\in(0,1)$, denote $h(x)=(\log x)^2$. 
	Then, $H_p(1)= H_p'(1) = h(1) = h'(1) = 0$ and, using AM-GM inequality,
	\begin{equation}\label{eq:H_p_2nd_der}
		H''_p(x) 
		= 
		\frac{x^{1/p} + x^{1/p'}}{x^2}
		\ge 
		\frac{2 x^{1/2p+1/2p'}}{x^2}
		\ge 
		\frac{2}{x^2}
		\ge 
		2\frac{1-\log x}{x^2}
		=h''(x),
	\end{equation}
	whence~\ref{L:H_p_lower_bound} follows.
	Moreover,~\eqref{eq:H_p_2nd_der} implies that $H_p''(x)\ge 0$, yielding also~\ref{L:H_p_monotone}.

	To see~\ref{L:H_p_xlogx}, denote the function on its RHS by $\tilde{h}(x)$ and note that by the definition of $\tilde{h}$, $H_p(\lambda) \ge \tilde{h}(\lambda)$ and $H_p'(\lambda) \ge \tilde{h}'(\lambda)$. 
	Since for any $x\ge 1$,
	\[
		H''_p(x)
		=
		\frac{x^{1/p} + x^{1/p'}}{x^2}
		\ge 
		x^{1/p - 2}
		\ge 
		\frac{1}{x}
		=
		(x\log x)'',
	\]
	then also $H''_p(x) \ge \tilde{h}''(x)$ for any $x\ge \lambda$ and~\ref{L:H_p_xlogx} follows (note that the calculation above also covers the case $p=1$, as then $1/p'=0$).

	Similarly, if $\tilde{h}(x)$ is the RHS of~\ref{L:H_p_cx}, then $H_p(\lambda) \ge \tilde{h}(\lambda)$ and $H_p'(\lambda) \ge \tilde{h}'(\lambda)$ by the definition of $\tilde{h}$. 
	As 
	\[
		H''_p(x)
		=
		\frac{x^{1/p} + x^{1/p'}}{x^2}
		\ge 
		c^{-1/p}
		\cdot 
		c^2
		\frac{(cx)^{1/p} + (cx)^{1/p'}}{(cx)^2}
		=
		\frac{d^2}{dx^2}
		\bigl(
			c^{-1/p}
			H_p(cx)
		\bigr),
	\]
	it follows that $H_p''(x)\ge \tilde{h}''(x)$ for any $x\ge \lambda$, yielding~\ref{L:H_p_cx}.

	Finally, since $H_p(1)=0$ and $H_p$ is convex by~\ref{L:H_p_monotone}, we have for any $\lambda \le y\le x$,
	\[
		\frac{H_p(y)}{y}
		\le
		\frac{H_p(y)}{y-1}
		\le 
		\frac{H_p(x)}{x-1}
		\le
		\frac{\lambda}{\lambda-1}
		\frac{H_p(x)}{x}
	\]
	yielding~\ref{L:H_p_c_increasing}.
\end{proof}
For $1<\rho\le x$ and $k\ge 1$, denote
\begin{equation}\label{eq:alpha_def}
	\alpha_{x,\rho}(k)
	=
	\inf\Bigl\{\,
		\sum_{s=0}^{k-1}
		H_p\bigl(
			\frac{g_{s+1}}{g_s}
		\bigr)
		\mu_s
		\; :\;
		g_0=1\le\ldots\le g_{k-1}
		< \rho \le x \le g_k
	\,\Bigr\}.
\end{equation}
This quantity plays a crucial role in providing sufficient condition for the $p$-log-Sobolev inequalities in Theorem~\ref{T:main}.
Its definition is partially inspired by an analogous quantity defined in~\cite{bartheroberto} in the continuous setting.

\begin{lemma}\label{L:alpha_estimate}
	For any $1<\rho\le x$ and $k\ge 1$,
	\begin{equation}\label{eq:alpha_1_estimate}
		\alpha_{x,\rho}(k)
		\ge 
		\Bigl[
			{\sum_{s=0}^{k-1} \mu_s^{-1}}	
		\Bigr]^{-1}
		\cdot
		{(\log x)^2}
	\end{equation}
	and 
	\begin{equation}\label{eq:alpha_2_estimate}
		\alpha_{x,\rho}(k)
		\ge 
		H_p(x\rho^{-1})\mu_{k-1}.
	\end{equation}
\end{lemma}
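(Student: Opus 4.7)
The proof should be short, extracting each of the two lower bounds from one of the properties of $H_p$ listed in Lemma~\ref{L:H_p_properties}.

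For~\eqref{eq:alpha_1_estimate}, the plan is to use the pointwise bound $H_p(t)\ge (\log t)^2$ for $t\ge 1$, i.e.\ Lemma~\ref{L:H_p_properties}\ref{L:H_p_lower_bound}, applied at $t=g_{s+1}/g_s\ge 1$, which reduces the problem to
\[
	\sum_{s=0}^{k-1}
	\Bigl(
		\log \frac{g_{s+1}}{g_s}
	\Bigr)^2 \mu_s
	\;\ge\;
	\Bigl[\sum_{s=0}^{k-1}\mu_s^{-1}\Bigr]^{-1}(\log x)^2.
\]
By Cauchy--Schwarz applied to $a_s=\log(g_{s+1}/g_s)\sqrt{\mu_s}$ and $b_s=1/\sqrt{\mu_s}$, the left-hand side is bounded below by $(\sum_s \log(g_{s+1}/g_s))^2/\sum_s \mu_s^{-1} = (\log(g_k/g_0))^2/\sum_s \mu_s^{-1}$. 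Since $g_0=1$ and $g_k\ge x$, the numerator is at least $(\log x)^2$, which gives the result.

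For~\eqref{eq:alpha_2_estimate}, the plan is to simply throw away all terms but the last one. Since $H_p\ge 0$ on $[1,\infty)$ (every term in the sum is nonnegative because each ratio $g_{s+1}/g_s$ lies in $[1,\infty)$ and $H_p(1)=0$ with $H_p$ nondecreasing by Lemma~\ref{L:H_p_properties}\ref{L:H_p_monotone}),
\[
	\sum_{s=0}^{k-1} H_p\Bigl(\frac{g_{s+1}}{g_s}\Bigr)\mu_s
	\;\ge\;
	H_p\Bigl(\frac{g_k}{g_{k-1}}\Bigr)\mu_{k-1}.
\]
Under the constraints $g_{k-1}<\rho$ and $g_k\ge x$ one has $g_k/g_{k-1}>x/\rho\ge 1$, and monotonicity of $H_p$ on $[1,\infty)$ yields $H_p(g_k/g_{k-1})\ge H_p(x/\rho)$, concluding the proof.

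No step should be an obstacle; both inequalities are direct consequences of the monotonicity/nonnegativity of $H_p$ on $[1,\infty)$ and the quadratic lower bound $H_p(t)\ge (\log t)^2$. The only subtlety to keep in mind is that the infimum is over weakly increasing sequences with strict inequality $g_{k-1}<\rho$, but the estimates are continuous in $(g_0,\ldots,g_k)$ and pass to the infimum without issue.
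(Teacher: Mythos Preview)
Your proposal is correct and follows essentially the same approach as the paper: for~\eqref{eq:alpha_1_estimate} the paper also applies Lemma~\ref{L:H_p_properties}\ref{L:H_p_lower_bound} and then Cauchy--Schwarz (after the cosmetic substitution $\lambda_s=\log(g_{s+1}/g_s)$), and for~\eqref{eq:alpha_2_estimate} it likewise drops all but the last term and uses monotonicity of $H_p$ together with the constraints $g_{k-1}<\rho\le x\le g_k$.
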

\begin{proof}
	We start with~\eqref{eq:alpha_1_estimate}.
	By Lemma~\ref{L:H_p_properties},~\ref{L:H_p_lower_bound},
	\begin{align*}
		\alpha_{x,\rho}(k)
		&\ge 
		\inf\Bigl\{
			\sum_{s=0}^{k-1}
			\Bigl(
				\log \frac{g_{s+1}}{g_s}
			\Bigr)^2 \mu_s
			\; :\;
			g_0=1\le\ldots\le g_{k-1}
			< \rho \le x \le g_k
		\Bigr\}
		\\&\ge 
		\inf\Bigl\{
			\sum_{s=0}^{k-1}
			\lambda_s^2 \mu_s
			\; :\;
			\sum_{s=0}^{k-1}\lambda_s \ge \log x
		\Bigr\}
		\\&\ge 
		\inf\Bigl\{
			\Bigl[\sum_{s=0}^{k-1}
			\lambda_s
			\Bigr]^2
			\Bigl[
				{\sum_{s=0}^{k-1} \mu_s^{-1}}	
			\Bigr]^{-1}
			\; :\;
			\sum_{s=0}^{k-1}\lambda_s \ge \log x
		\Bigr\}
		=
		\Bigl[
			{\sum_{s=0}^{k-1} \mu_s^{-1}}	
		\Bigr]^{-1}
		\cdot (\log x)^2,
	\end{align*}
	where in the last estimate we used the Cauchy-Schwarz inequality.

	We turn to~\eqref{eq:alpha_2_estimate}. 
	By Lemma~\ref{L:H_p_properties}, $H_p$ is non-negative and increasing on $[1,\infty)$, whence 
	\begin{align*}
		\alpha_{x.\rho}(k)
		&\ge 
		\inf\Bigl\{\,
			H_p\bigl(
				\frac{g_{k}}{g_{k-1}}
			\bigr)
			\mu_{k-1}
			\; :\;
			1\le g_{k-1}
			< \rho \le x \le g_k
		\,\Bigr\}
		=
		H_p(x\rho^{-1})\mu_{k-1}
	\end{align*}
	as desired.
\end{proof}
\section{Proof of Theorem~\ref{T:main}}
	\subsection{Sufficient condition}
	Fix $p\in(0,1]$ and assume that $\mu$ satisfies the Poincar\'{e} inequality~\eqref{eq:poincare_N} with constant $C_P<\infty$ and that $\hat{C}<\infty$, i.e., the condition~\eqref{eq:main_condition} holds. 
	Recall the definition~\eqref{eq:C_mu} of $C_\mu$ and that for fully supported measures, $C_\mu \le C_P/2\mu_0 < \infty$, cf. Theorem~\ref{T:Miclo} and~Remark~\ref{rk:hardy-poincare}.
	We show that $\mu$ satisfies the $p$-log-Sobolev inequality~\eqref{eq:pLS_birth_death} with constant $C$ bounded from above by a quantity depending on $C_\mu$ and $\hat{C}$ only.

	Define $\rho$ as 
	\begin{equation}\label{eq:rho_def}
		\rho = \min\Bigl(\bigl(
			\frac{1+C_{\mu}}{C_\mu}
		\bigr)^{1/3}, 
		2\Bigr).
	\end{equation}
	By Corollary~\ref{C:Barthe-Roberto}, it suffices to show that for any non-decreasing function $f\colon\NN\to\RR_+$ with a finite number of jumps,
	\begin{equation}\label{eq:increasing_fn_cond}
		\sum_{k\colon f(k)\ge \rho\EE f}
		f(k)\log\big(\frac{f(k)}{\EE_\mu f}\big) \mu_k
		\le 
		C_{\rho}\calE_p(f)
		=
		C_{\rho}
		\sum_{k=0}^\infty
		f(k) 
		H_p\bigl(
			\frac{f(k+1)}{f(k)}
		\bigr)
		\mu_k
	\end{equation}
	for some constant $C_{\rho}>0$ independent of $f$.
	By the homogeneity of~\eqref{eq:increasing_fn_cond}, we can assume that $f(0)=1$.
	Let us consider such $f$ and denote
	\begin{align*}
		\tau_{0}=0,
		\quad 
		\tau_k = \inf\{\, l > \tau_{k-1} \colon f(l) \ge \rho f(\tau_{k-1}) \,\}
	\end{align*}
	for $k\ge 1$.
	Since $f$ has a finite number of jumps, then there exists $M\in\NN\setminus\{0\}$ such that $\tau_{M-1} < \tau_{M} =\infty$.
	If $M=1$, then the LHS of~\eqref{eq:increasing_fn_cond} equals zero as $\EE f \ge f(0)=1$ and~\eqref{eq:increasing_fn_cond} holds with any $C_{\rho}>0$.
	Assume therefore from now on that $M > 1$.
	For $k\in \{1,\ldots, M-1\}$, denote 
	\begin{equation*}
		\gamma_k = 
		\rho f(\tau_k) 
		\log_+\big(\frac{\rho f(\tau_k)}{\EE_\mu f}\big) 
		\mu[\tau_k,\infty),
	\end{equation*}
	where $\log_+(x) := \max(\log x, 0)$
	and let
	\begin{equation*}
		\delta_k = f(\tau_{k-1})
		\sum_{l=\tau_{k-1}}^{\tau_k-1} 
		H_p\bigl(
			\frac{f(l+1)}{f(l)}
		\bigr)
		\mu_l.
	\end{equation*}
	Since $\EE f \ge f(0) = 1$ and as for $k\ge 1$ and $l\in [\tau_k,\tau_{k+1})$, $f(\tau_k)\le f(l) < \rho f(\tau_k)$, we have

	\begin{align*}
		\sum_{k\colon f(k)\ge \rho\EE_\mu f}
		f(k)\log\big(\frac{f(k)}{\EE_\mu f}\big) \mu_k
		&\le 
		\sum_{k\colon f(k)\ge \rho}
		f(k)\log_+\big(\frac{f(k)}{\EE_\mu f}\big) \mu_k
		\\&=
		\sum_{k=1}^{M-1} \sum_{l=\tau_k}^{\tau_{k+1}-1}
		f(l)\log_+\big(\frac{f(l)}{\EE_\mu f}\big) \mu_l
		\\&<
		\sum_{k=1}^{M-1}
		\rho f(\tau_k) \log_+\big(\frac{\rho f(\tau_k)}{\EE_\mu f}\big) \mu[\tau_k,\tau_{k+1})
		\le 
		\sum_{k=1}^{M-1}
		\gamma_k
	\end{align*}
	and as $H_p\ge 0$ and by the monotonicity of $f$,
	\begin{align*}
		\sum_{k=0}^\infty
		f(k) H_p\bigl(
			\frac{f(k+1)}{f(k)}
		\bigr)
		\mu_k
		\ge
		\sum_{k=1}^{M-1} \delta_k.
	\end{align*}
	Therefore, to prove~\eqref{eq:increasing_fn_cond}, it suffices to show that 
	\begin{equation}\label{eq:necessary-cond-reduction-gamma-delta}
		\sum_{k=1}^{M-1} \gamma_k \le C_{\rho}\sum_{k=1}^{M-1}\delta_k.
	\end{equation}

	Recall that $H_p(1)=0$, $H_p\ge 0$ and recall the definition~\eqref{eq:alpha_def} of $\alpha_{x,\rho}$.
	Consider $g_l=1$ for $l=0,1,\ldots,\tau_{k-1}$ and $g_l=f(l)/f(\tau_{k-1})$ for $l=\tau_{k-1}+1,\ldots,\tau_k$.
	Then, by the definition of $\delta_k$, we have for any $k\in \{1,\ldots, M-1\}$,
	\begin{align}\label{eq:alpha_delta_relation}
	\begin{split}
		\alpha_{f(\tau_k)/f(\tau_{k-1}), \rho}(\tau_k)
		\le 
		\delta_k / f(\tau_{k-1}).
	\end{split}
	\end{align}
	For $k=1$, using the monotonicity of $f$, estimate~\eqref{eq:alpha_delta_relation} and the fact that $f(\tau_0)=1 < \rho \le f(\tau_1)$, we get
	\begin{align}
	\label{eq:C1_def}
	\begin{split}
		\gamma_1
		&\le
		\rho f(\tau_1) \log_+\big(\frac{\rho f(\tau_1)}{\mu[0,\tau_1) + f(\tau_1)\mu[\tau_1,\infty)}\big) 
		\mu[\tau_1,\infty)
		\cdot 
		\frac{\delta_1}{\delta_1}
		\\&\le 
		\rho f(\tau_1)
		\log_+\big(\frac{\rho f(\tau_1)}{\mu[0,\tau_1) + f(\tau_1)\mu[\tau_1,\infty)}\big)
		\mu[\tau_1,\infty)
		\bigl[
			{\alpha_{f(\tau_1),\rho}(\tau_1)}
		\bigr]^{-1}
		\cdot 	
		\delta_1
		\\&\le 
		\sup 	\Bigl\{
			{\rho x\log_+\bigl(\frac{\rho x}{\mu[0,l)+x\mu[l,\infty)}\bigr) \mu[l,\infty)}
			\bigl[
				{\alpha_{x,\rho}(l)}
			\bigr]^{-1}
			\; : \; 
			x \ge \rho,\; l \ge 1
		\Bigr\}
		\cdot 	
		\delta_1
		\\&=:
		C_1\delta_1.
	\end{split}
	\end{align}
	
	For $2\le k \le M-1$ (if such exist), we estimate each $\gamma_k$ based on two cases.
	To that end, choose $\varepsilon = \rho^{-1}$ and note that by the definition~\eqref{eq:rho_def} of $\rho$
	and by Lemma~\ref{L:rho_lower_bound}, for any $l\ge 1$,
	\begin{equation}\label{eq:rho_eps_relation}
		\varepsilon
		<
		1
		<
		\rho
		<
		\rho^2 
		\le
		\varepsilon 
		\frac{
			\mu[l-1,\infty)
		}{
			\mu[l,\infty)
		}.
	\end{equation}
	Consider the case $f(\tau_k)/f(\tau_{k-1}) > \varepsilon \mu[\tau_{k}-1,\infty)/\mu[\tau_k,\infty)$. 
	Using monotonicity of $f$, Markov's inequality implies that $\frac{\rho f(\tau_k)}{\EE_\mu f}\le \frac{\rho}{\mu[\tau_k,\infty)}$.
	Using this estimate together with~\eqref{eq:alpha_delta_relation}, we get that
	\begin{align}
	\begin{split}
		\gamma_k
		&\le 
		{\rho f(\tau_k) \log \big(\frac{\rho }{\mu[\tau_k,\infty)}\big) \mu[\tau_k,\infty)}
		\cdot 
		\frac{\delta_k}{\delta_k}
		\\&\le 
		\frac{\rho f(\tau_k)}{f(\tau_{k-1})}
		{
			\log \bigl(
				\frac{\rho}{\mu[\tau_k,\infty)}
			\bigr)
			\mu[\tau_k,\infty)
		}
		\bigl[
			{\alpha}_{f(\tau_k)/f(\tau_{k-1}),\rho}(\tau_k)
		\bigr]^{-1}
		\cdot 
		\delta_k
		\\&\le 
		\sup\Bigl\{
			\frac{
				\rho x \log\bigl(
					\frac{\rho}{\mu[l,\infty)}
				\bigr)
				\mu[l,\infty)
			}{
				{\alpha}_{x,\rho}(l)
			}
			\,:\,
			l \ge 1,\,
			x > 
			\varepsilon 
			\frac{
				\mu[l-1,\infty)
			}{
				\mu[l,\infty)
			}
		\Bigr\}
		\cdot 
		\delta_k
		\\&=:
		C_2 \delta_k.
	\end{split}
	\label{eq:C2_def}
	\end{align}
	If $f(\tau_k)/f(\tau_{k-1}) \le \varepsilon \mu[\tau_{k}-1,\infty)/\mu[\tau_k,\infty)$, 
	then using the estimate $\log_+(xy)\le \log_+(x)+\log_+(y)$, we split
	\[
		\gamma_k
		\le
		\underbrace{
		\rho f(\tau_k) \log_+\big(\frac{\rho f(\tau_{k-1})}{\EE f}\big) \mu[\tau_k,\infty) 
		}_{=: A}
		+
		\underbrace{
		\rho f(\tau_k) \log_+\big(\frac{f(\tau_k)}{f(\tau_{k-1})}\big) \mu[\tau_k,\infty) 
		}_{=: B},
	\]
	and we estimate $A$ and $B$ separately using the assumption as follows:
	\begin{align}
	\begin{split}\label{eq:gamma_recursion}
		A 
		&\le 
		\varepsilon \rho 
		f(\tau_{k-1}) \log_+\big(\frac{\rho f(\tau_{k-1})}{\EE f}\big) \mu[\tau_{k}-1,\infty)
		\\&\le 
		\varepsilon \rho 
		f(\tau_{k-1}) \log_+\big(\frac{\rho f(\tau_{k-1})}{\EE f}\big) \mu[\tau_{k-1},\infty)
		=\varepsilon \gamma_{k-1}
	\end{split}
	\end{align}
	and, using monotonicity of $f$ and~\eqref{eq:alpha_delta_relation},
	\begin{align}
	\begin{split}
		B 
		&= 
		\rho f(\tau_k) \log\big(\frac{f(\tau_k) }{f(\tau_{k-1})}\big) \mu[\tau_k,\infty)
		\cdot
		\frac{\delta_k}{\delta_k}
		\\&\le 
		\frac{\rho f(\tau_k)}{f(\tau_{k-1})} 
		\log\big(\frac{f(\tau_k) }{f(\tau_{k-1})}\big) \mu[\tau_k,\infty)
		\bigl[
			{\alpha}_{f(\tau_k)/f(\tau_{k-1}),\rho}(\tau_k)
		\bigr]^{-1}
		\cdot 
		\delta_k
		\\&\le 
		\sup 
		\Bigl\{
			\frac{\rho x \log(x)}{\alpha_{x,\rho}(l)}\mu[l,\infty)
			\; : \; 
			l \ge 1,\, 
			\rho \le x \le \varepsilon\frac{\mu[{l-1},\infty)}{\mu[l,\infty)}
		\Bigr\}
		\cdot 
		\delta_k
		\\&=:
		C_3\delta_k.
	\end{split}
	\label{eq:C3_def}
	\end{align}
	Combine estimates~\eqref{eq:C2_def}, \eqref{eq:gamma_recursion} and \eqref{eq:C3_def} to get that for $k > 1$,
	\[
		\gamma_k
		\le 
		\varepsilon
		\gamma_{k-1}
		+
		(C_2+C_3)
		\delta_k.
	\]
	By~\eqref{eq:C1_def}, $\gamma_1\le C_1\delta_1$, whence
	\begin{equation}\label{eq:gamma-delta-relation}
		(1-\varepsilon)
		\sum_{k=1}^{M-1}\gamma_k
		\le 
		(C_1+C_2+C_3)
		\sum_{k=1}^{M-1}\delta_k.
	\end{equation}
	It suffices therefore to estimate the terms $C_1$, $C_2$ and $C_3$.
	\stoptocwriting
	\subsection*{Estimating \boldmath$C_3$}
	Recall the definition~\eqref{eq:C3_def} of $C_3$ and the relation~\eqref{eq:rho_eps_relation} -- we split the supremum based on whether $x\le\rho^2$ or not.
	By Lemma~\ref{L:alpha_estimate}, eq.~\eqref{eq:alpha_1_estimate}, and by the definition~\eqref{eq:C_mu} of $C_\mu$,
	\begin{multline}
		\label{eq:C3_1st_estimate}
		\sup
		\Bigl\{
			\frac{\rho x \log(x)}{\alpha_{x,\rho}(l)}\mu[l,\infty)
			\; : \; 
			l \ge 1,\, 
			\rho \le x \le \rho^2
		\Bigr\}
		\\ \le 
		\sup\Bigl\{
			\frac{\rho x }{\log x}
			\bigl[
				\sum_{s=0}^{l-1}\mu_s^{-1}
			\bigr]
			\mu[l,\infty)
			\; : \; 
			l \ge 1,\, 
			\rho \le x \le \rho^2
		\Bigr\}
		\le 
		C_\mu\frac{\rho^3}{\log \rho}. 
	\end{multline}
	Similarly, by Lemma~\ref{L:alpha_estimate}, eq.~\eqref{eq:alpha_2_estimate}, and by the definition~\eqref{eq:C_mu} of $C_\mu$,
	\begin{multline}
		\label{eq:C3_2nd_estimate}
		\sup
		\Bigl\{
			\frac{\rho x \log(x)}{\alpha_{x,\rho}(l)}\mu[l,\infty)
			\; : \; 
			l \ge 1,\, 
			\rho^2 \le x 
			\le 
			\varepsilon
			\frac{\mu[{l-1},\infty)}{\mu[l,\infty)}
		\Bigr\}
		\\ \le 
		\sup
		\Bigl\{
			\frac{\rho x \log(x)}{H_p(x\rho^{-1})}
			\frac{\mu[l,\infty)}{\mu_{l-1}}
			\; : \; 
			l \ge 1,\, 
			\rho^2 \le x 
			\le 
			\varepsilon
			\frac{\mu[{l-1},\infty)}{\mu[l,\infty)}
		\Bigr\}
		\le 
		C_\mu
		\sup_{x\ge \rho^2}
		\frac{\rho x \log(x)}{H_p(x\rho^{-1})}.
	\end{multline}
	Combining~\eqref{eq:C3_1st_estimate} and~\eqref{eq:C3_2nd_estimate} and using~\ref{L:H_p_xlogx} of Lemma~\ref{L:H_p_properties} with $\lambda=\rho$ and $x\rho^{-1} \ge \rho$ in place of $x$, we obtain 
	\begin{align}\label{eq:C3_bound}
	\begin{split}
		C_3 
		&\le
		C_\mu\Bigl(
			\frac{\rho^3}{\log \rho}
			+
			\max\bigl\{
				1,
				\frac{\rho\log\rho}{H_p(\rho)},
				\frac{1+\log \rho}{H_p'(\rho)}
			\bigr\}
			\cdot
			\sup_{x\ge \rho^2}
			\frac{\rho^2 \log x}{\log (x\rho^{-1})}
		\Bigr)
		\\&=
		C_\mu\Bigl(
			\frac{\rho^3}{\log \rho}
			+
			2
			\max\bigl\{
				1,
				\frac{\rho\log\rho}{H_p(\rho)},
				\frac{1+\log \rho}{H_p'(\rho)}
			\bigr\}
			\cdot
			\rho^2
		\Bigr).
	\end{split}
	\end{align}

	\subsection*{Estimating \boldmath$C_2$}
	Recall the definition~\eqref{eq:C2_def} of $C_2$ and the relation~\eqref{eq:rho_eps_relation}.
	By Lemma~\ref{L:alpha_estimate}, eq.~\eqref{eq:alpha_2_estimate} and using that $\rho\le 2$ by definition, we get
	\begin{align}\label{eq:C2_step1}
	\begin{split}
		C_2
		&\le 
		\sup\Bigl\{
			\frac{
				\rho x 
			}{
				H_p(x\rho^{-1})
			}
			\cdot 
			\log\bigl(
					\frac{\rho}{\mu[l,\infty)}
				\bigr)
			\cdot 
			\frac{
				\mu[l,\infty)
			}{
				\mu_{l-1}
			}
			\,:\,
			l \ge 1,\,
			x > 
			\varepsilon 
			\frac{
				\mu[l-1,\infty)
			}{
				\mu[l,\infty)
			}
		\Bigr\}
		\\&\le 
		\sup\Bigl\{
			\frac{
				\rho x 
			}{
				H_p(x\rho^{-1})
			}
			\cdot 
			\log\bigl(
					\frac{2}{\mu[l,\infty)}
				\bigr)
			\cdot 
			\frac{
				\mu[l,\infty)
			}{
				\mu_{l-1}
			}
			\,:\,
			l \ge 1,\,
			x > 
			\varepsilon 
			\frac{
				\mu[l-1,\infty)
			}{
				\mu[l,\infty)
			}
		\Bigr\}.
	\end{split}
	\end{align}
	For any $l\ge 1$ and $x >
	\varepsilon 
	\frac{
		\mu[l-1,\infty)
	}{
		\mu[l,\infty)
	}$, applying~\ref{L:H_p_c_increasing} of Lemma~\ref{L:H_p_properties} with $\rho^{-1}x\ge \rho$ in place of $x$, 
	$
		\frac{\varepsilon}{\rho}
		\frac{
			\mu[l-1,\infty)
		}{
			\mu[l,\infty)
		}
		\ge \rho
	$ in place of $y$ and $\lambda=\rho$ (recall~\eqref{eq:rho_eps_relation}) we get that 
	\[
		\frac{\rho}{\varepsilon}
		\frac{
			\mu[l,\infty)
		}{
			\mu[l-1,\infty)
		}
		H_p\bigl(
			\frac{\varepsilon}{\rho}
			\frac{
				\mu[l-1,\infty)
			}{
				\mu[l,\infty)
			}
		\bigr)
		\le 
		\frac{\rho^2}{\rho-1}
		\frac{
			H_p(x\rho^{-1})
		}{x},
	\]
	which after rearrangement (recall that $\varepsilon=\rho^{-1}$) is equivalent to 
	\[
		\frac{\rho x}{H_p(x\rho^{-1})}
		\cdot 
		{\mu[l,\infty)}
		\le 
		\frac{\rho}{\rho -1}
		\Big[ 
			H_p\bigl(
				\frac{1}{\rho^2}
				\frac{
					\mu[l-1,\infty)
				}{
					\mu[l,\infty)
				}
			\bigr)
		\Big]^{-1}
		\mu[l-1,\infty),
	\]
	which combined with Lemma~\ref{L:rho_lower_bound} allows estimating further~\eqref{eq:C2_step1} as follows:
	\begin{align}\label{eq:C2_step2}
	\begin{split}
		C_2 
		&\le 
		\frac{\rho}{\rho-1} 
		\sup_{l\ge 1} \Bigl\{
			\Bigl[
				H_p\bigl(
					\frac{1}{\rho^2}
					\frac{
						\mu[l-1,\infty)
					}{
						\mu[l,\infty)
					}
				\bigr)
			\Bigr]^{-1}
			\cdot 
			\log\bigl(
				\frac{2}{\mu[l,\infty)}
			\bigr)
			\cdot 
			\frac{\mu[l-1,\infty)}{\mu_{l-1}}
		\Bigr\}
		\\&\le 
		(1+C_\mu)
		\frac{\rho}{\rho-1}
		\sup_{l\ge 1} \Bigl\{
			\Bigl[
				H_p\bigl(
					\frac{1}{\rho^2}
					\frac{
						\mu[l-1,\infty)
					}{
						\mu[l,\infty)
					}
				\bigr)
			\Bigr]^{-1}
			\cdot 
			\log\bigl(
				\frac{2}{\mu[l,\infty)}
			\bigr)
		\Bigr\}.
	\end{split}
	\end{align}
	For any $l\ge 1$, applying~\ref{L:H_p_cx} of Lemma~\ref{L:H_p_properties} with $c=\rho^2$, $\lambda=\rho$ and 
	$
	\frac{1}{\rho^2}
	\frac{
		\mu[l-1,\infty)
	}{
		\mu[l,\infty)
	}\ge \rho$
	in place of $x$ 
	(recall~\eqref{eq:rho_eps_relation})
	gives 
	\[
		H_p\bigl(
			\frac{1}{\rho^2}
			\frac{
				\mu[l-1,\infty)
			}{
				\mu[l,\infty)
			}
		\bigr)
		\ge 
		H_p\bigl(
			\frac{
				\mu[l-1,\infty)
			}{
				\mu[l,\infty)
			}
		\bigr)
		\min\bigl\{
			\frac{H_p(\rho)}{H_p(\rho^3)},
			\frac{H_p'(\rho)}{\rho^2 H_p'(\rho^3)},
			\rho^{-2/p}
		\bigr\},
	\]
	which combined with~\eqref{eq:C2_step2} and assumption~\eqref{eq:main_condition} results in
	\begin{align}\label{eq:C2_bound}
		C_2
		&\le 
		(1+C_\mu)
		\frac{\rho}{\rho-1}
		\max\bigl\{
			\frac{H_p(\rho^3)}{H_p(\rho)},
			\frac{\rho^2H_p'(\rho^3)}{H_p'(\rho)},
			\rho^{2/p}	
		\bigr\}
		\cdot
		\hat{C}
		<\infty.
	\end{align}

	\subsection*{Estimating \boldmath$C_1$.} 
	Recall the definition~\eqref{eq:C1_def} of $C_1$.
	We use the same ideas as above by considering two cases.
	For any $l\ge 1$, if $\rho \le x\le \varepsilon \mu[l-1,\infty)/\mu[l,\infty)$, then
	\begin{align}\label{eq:C1-C3-estimate}
	\begin{split}
		\rho x\log\bigl(\frac{\rho x}{\mu[0,l)+x\mu[l,\infty)}\bigr) \mu[l,\infty)
		&\le 
		\rho x\log(\rho x) \mu[l,\infty)
		\\&\le 
		2\rho x\log( x) \mu[l,\infty)
		\le 
		2C_3 
		\cdot 
		\alpha_{x,\rho}(l),
	\end{split}
	\end{align}
	where in the fist and second step we used that $x\ge \rho > 1$ and in the last step we used the definition~\eqref{eq:C3_def} of $C_3$.

	If $x>\varepsilon \mu[l-1,\infty)/\mu[l,\infty)$, then
	by the definition~\eqref{eq:C2_def} of $C_2$,
	\begin{equation}\label{eq:C1-C2-estimate}
		\rho x\log\bigl(\frac{\rho x}{\mu[0,l)+x\mu[l,\infty)}\bigr) \mu[l,\infty)
		\le 
		\rho x\log\bigl(\frac{\rho }{\mu[l,\infty)}\bigr) \mu[l,\infty)
		\le 
		C_2
		\cdot 
		\alpha_{x,\rho}(l)
	\end{equation}
	and thus combining~\eqref{eq:C1-C3-estimate} and~\eqref{eq:C1-C2-estimate}, we arrive at
	\begin{equation}\label{eq:C1-C2-C3-estimate}
		C_1\le C_2+2C_3.
	\end{equation}
	\subsection*{Final estimate}
	Combining~\eqref{eq:C3_bound} and~\eqref{eq:C2_bound} together with bounds~\eqref{eq:gamma-delta-relation} and~\eqref{eq:C1-C2-C3-estimate} yields~\eqref{eq:necessary-cond-reduction-gamma-delta} with $C_\rho=\frac{2C_2+3C_3}{1-\varepsilon}$, which is bounded from above by
	\begin{multline*}
			\frac{\rho}{\rho-1}
			\Bigl[
			2
			(1+C_\mu)
			\hat{C}
			\frac{\rho}{\rho-1}
			\max\bigl\{
				\frac{H_p(\rho^3)}{H_p(\rho)},
				\frac{\rho^2H_p'(\rho^3)}{H_p'(\rho)},
				\rho^{2/p}	
			\bigr\}
			\\ +
			3
			C_\mu\Bigl(
				\frac{\rho^3}{\log\rho} 
				+
				2\rho^2
				\max\bigl\{
					1,
					\frac{\rho\log\rho}{H_p(\rho)},
					\frac{1+\log \rho}{H_p'(\rho)}
				\bigr\}
			\Bigr)
			\Bigr]
			< \infty.
	\end{multline*}
	Therefore, we obtain~\eqref{eq:increasing_fn_cond} for any non-decreasing $f\colon\NN\to\RR_+$ with a finite number of jumps and $C_\rho$ as above.
	Thus, Corollary~\ref{C:Barthe-Roberto} implies that
	\[
		\Ent_\mu(f)
		\le 
		[64C_\mu(1+\sqrt{\rho})^2 
		+ 
		4C_\rho]
		\calE_p(f)
	\]
	for any $f\colon\NN\to\RR_+$, as desired.
\resumetocwriting
\subsection{Necessary condition}
For the sake of contradiction, assume that there exists a sequence $0=\tau_0<\tau_1<\ldots$ such that~\eqref{eq:necessary_cond} holds, i.e.,  
\[
	\beta_k 
	:=  
	\Bigl[
		H_p\bigl( 
			\frac{\mu[\tau_{k-1},\infty)}{\mu[\tau_k,\infty)}
		\bigr)
		\Bigr]^{-1}
		\cdot 
		\frac{\mu[\tau_{k-1},\infty)}{\mu[\tau_{k}-1,\infty)}
		\cdot 
		\log\bigl(
			\frac{2}{\mu[\tau_k,\infty)}
		\bigr)
		\to
		\infty
\]
as $k\to\infty$,
and that $\mu$ verifies $p$-LS($C$) with some finite constant $C>0$.
By Theorem~\ref{T:MOS}, $\mu$ satisfies the Poincar\'{e} inequality~\eqref{eq:Poincare} with the same constant $C$
and therefore $C_\mu<\infty$ (recall the definition~\eqref{eq:C_mu} of $C_\mu$ and Remark~\ref{rk:hardy-poincare}).
For $M\ge 1$, set 
\[
	f_M = \Bigl[
		\sum_{k=0}^{M-1} \frac{
			{\bf 1}_{[\tau_k, \tau_{k+1})}
		}{
			\mu[\tau_k,\infty)
		}
	\Bigr]
	+
	\frac{
		{\bf 1}_{[\tau_M, \infty)}
	}{
		\mu[\tau_M,\infty)
	}.
\] 
Then $\EE_\mu f_M \le M+1$.
Moreover, by Lemma~\ref{L:rho_lower_bound}, $\frac{\mu[\tau_k,\tau_{k+1})}{\mu[\tau_k,\infty)}\ge\frac{1}{1+C_\mu}$ for any $k\in\NN$, whence
\[
	\Ent_\mu(f_M)
	\ge 
	-(M+1)\log(M+1)
	+
	\frac{1}{1+C_\mu}
	\sum_{k=1}^{M}\log\bigl(
		\frac{1}{\mu[\tau_k,\infty)}
	\bigr).
\]
Similarly (recall that $H_p(1)=0$),
\begin{align*}
	\calE_p(f_M)
	&=
	\sum_{k=1}^M \mu_{\tau_k-1}f(\tau_k-1)H_p\bigl(
		\frac{f(\tau_k)}{f(\tau_k-1)}
	\bigr)
	\\&\le
	\sum_{k=1}^M
	\frac{\mu[\tau_k-1,\infty)}{\mu[\tau_{k-1},\infty)}
	H_p\bigl(
		\frac{f(\tau_k)}{f(\tau_k-1)}
	\bigr)
	=
	\sum_{k=1}^M \beta_k^{-1}
	\log\bigl(
		\frac{2}{\mu[\tau_k,\infty)}
	\bigr)
\end{align*}
and consequently, since $\Ent_\mu(f_M)\le \frac{C}{4}\calE_p(f_M)$ by assumption, 
\begin{multline}\label{eq:beta-M-estimate}
	\sum_{k=1}^M
	\Bigl[
		\log\bigl(
			\frac{1}{\mu[\tau_k,\infty)}
		\bigr)
		\cdot 
		\bigl(
			1 - \frac{C(1+C_\mu)}{4\beta_k}
		\bigr)
	\Bigr]
	\le 
	(1+C_\mu)\Bigl[
		\frac{C}{4}M(\log 2)\sup_{k}\beta_k^{-1}
		+
		(M+1)\log(M+1)
	\Bigr]
\end{multline}
(recall that $\beta_k\to\infty$, whence $\sup_{k}\beta_k^{-1}<\infty$).
Let $k_0$ be such that $2\beta_k\ge C(1+C_\mu)$ for every $k\ge k_0$.
By Proposition~\ref{P:poisson_tails}, 
$\log\bigl(
	\frac{1}{\mu[\tau_k,\infty)}
\bigr)\ge cp\tau_k\log\tau_k$ for some constant $c>0$, whence by~\eqref{eq:beta-M-estimate}
\begin{align*}
	\sum_{k=k_0}^{M}k\log k
	&\le 
	\sum_{k=k_0}^{M}\tau_k\log \tau_k
	\\&\le 
	\frac{1}{cp}
	\sum_{k=k_0}^M
	\log\bigl(
		\frac{1}{\mu[\tau_k,\infty)}
	\bigr)
	\le
	\tilde{c}
	\cdot 
	(M+1)\log(M+1)
\end{align*}
for $M$ big enough and some constant $\tilde{c}>0$ independent of $M$.
We arrive at the desired contradiction by taking a limit as $M\to\infty$ and by noting that $\int_{k_0}^M x\log x\,dx \ge \hat{c}M^2\log M$ for $M$ big enough and some constant $\hat{c}>0$ independent of $M$. 
\section{Proof of Theorem~\ref{C:main}}
As a counterexample we will take the Conway--Maxwell--Poisson distribution with parameter $\nu>0$, defined as $\mu_\nu(k) = \frac{1}{Z_\nu}(k!)^{-\nu}$ for $k \in \NN$, where $Z_\nu = \sum_{k\ge 0} (k!)^{-\nu}$ is the normalizing constant, cf.~\cite{conway1962queuing}.

For any $n\in\NN$, using the estimate 
\begin{equation}\label{eq:factorial-estimate}
	b! \ge a!(b-a)!
\end{equation} 
valid for $b \ge a \ge 0$ we obtain
\begin{align}\label{eq:CMP_est}
\begin{split}
	\mu_\nu(\{n\})
	\le 
	\mu_\nu[n,\infty)
	&=
	\frac{1}{Z_\nu}
	\sum_{k\ge n}
	\frac{1}{(k!)^\nu}
	\\&\le
	\frac{1}{Z_\nu (n!)^\nu}
	\sum_{k \ge 0}
	\frac{1}{(k!)^\nu}
	=
	\frac{1}{(n!)^\nu}
	=
	Z_\nu \mu_\nu(\{n\}).
\end{split}
\end{align}
Recall the definition~\eqref{eq:C_mu} of $C_\mu$.
Using~\eqref{eq:CMP_est} and~\eqref{eq:factorial-estimate}, we obtain for $n\in\NN\setminus\{0\}$,
\begin{align*}
	\mu_\nu[n,\infty)
	\sum_{k=0}^{n-1}
	\frac{1}{\mu_\nu(\{k\})}
	\le 
	Z_\nu \mu_\nu(\{n\})
	\sum_{k=0}^{n-1}
	\frac{1}{\mu_\nu(\{k\})}
	= 
	Z_\nu
	\frac{\sum_{k=0}^{n-1}(k!)^{\nu}}{(n!)^{\nu}}
	\le 
	Z_\nu^2,
\end{align*}
whence $C_{\mu_\nu}<\infty$ and thus $\mu_\nu$ satisfies the Poincar\'{e} inequality~\eqref{eq:poincare_N} with constant $C_P=8Z_\nu^2$, cf. Theorem~\ref{T:Miclo} and Remark~\ref{rk:hardy-poincare}.

We first show that for any $\nu\in(0,1]$, $\mu_\nu$ verifies the $p$-log-Sobolev inequality~\eqref{eq:pLS_birth_death} for any $p<\nu$.
Fix some $0<p<\nu\le 1$.
By the first part of Theorem~\ref{T:main}, it suffices to show that 
\begin{equation}\label{eq:CMP_cond}
	\sup_{n\ge 1}
	\Bigl\{
	\Bigl[
	H_p\bigl( 
		\frac{\mu_\nu[n-1,\infty)}{\mu_\nu[n,\infty)}
	\bigr)
	\Bigr]^{-1}
	\cdot 
	\log\bigl(
		\frac{2}{\mu_\nu[n,\infty)}
	\bigr)
	\Bigr\} 
	<\infty	.
\end{equation}
By~\eqref{eq:CMP_est}, for any $n\ge 1$
\begin{align}\label{eq:CMP_tails}
\begin{split}
	\frac{n^\nu}{Z_\nu} 
	= 
	\frac{\mu_\nu(\{n-1\})}{\mu_\nu(\{n\})} \frac{1}{Z_\nu} 
	&\le
	\frac{\mu_\nu(\{n-1\})}{\mu_\nu[n,\infty)}
	\\&\le
	\frac{\mu_\nu[n-1,\infty)}{\mu_\nu[n,\infty)}
	\\&\le 
	Z_\nu \frac{\mu_\nu(\{n-1\})}{\mu_\nu[n,\infty)}
	\le 
	Z_\nu \frac{\mu_\nu(\{n-1\})}{\mu_\nu(\{n\})}
	= 
	Z_\nu n^\nu.
\end{split}
\end{align}
This, together with Lemma~\ref{L:rho_lower_bound} and the definition~\eqref{eq:H_p_def} of $H_p$ implies that 
\begin{equation}\label{eq:CMP_Hp1}
	\Bigl[
	H_p\bigl( 
		\frac{\mu_\nu[n-1,\infty)}{\mu_\nu[n,\infty)}
	\bigr)
	\Bigr]^{-1}
	\le 
	\Bigl[
	H_p\bigl( 
		\max\bigl\{
			\frac{1+C_{\mu_\nu}}{C_{\mu_\nu}},
			\frac{n^\nu}{Z_\nu} 
		\bigr\}
	\bigr)
	\Bigr]^{-1}
	\le 
	\frac{C'}{n^{\nu/p}}
\end{equation}
for any $n\ge 1$ and some big enough constant $C'>0$ (independent on $n$ but dependent on $\nu$ and $p$).
By~\eqref{eq:CMP_est} and Stirling's formula
\begin{equation}\label{eq:stirling}
	\lim_{n\to\infty}
	\frac{
		\log \bigl(\frac{2}{\mu_\nu[n,\infty)}\bigr)
		}{
			n\log n
		}
	=
	\nu.
\end{equation}
Combining~\eqref{eq:CMP_Hp1} with~\eqref{eq:stirling} and recalling that $p<\nu$, we obtain~\eqref{eq:CMP_cond}.

Finally, we show that for any $\nu\in(0,1)$, $\mu_\nu$ does not satisfy the $\nu$-log-Sobolev inequality.
By Theorem~\ref{T:main}, it suffices to show that there exists an increasing sequence $\tau_0<\tau_1<\ldots$ such that 
\begin{equation}\label{eq:CMP_cond2}
	\lim_{n\to\infty}
	\Big\{
		\Bigl[
		H_\nu\bigl( 
			\frac{\mu[\tau_{n-1},\infty)}{\mu[\tau_n,\infty)}
		\bigr)
		\Bigr]^{-1}
		\cdot 
		\frac{\mu[\tau_{n-1},\infty)}{\mu[\tau_{n}-1,\infty)}
		\cdot 
		\log\bigl(
			\frac{2}{\mu[\tau_n,\infty)}
		\bigr)
	\Big\}
	=
	\infty.
\end{equation}
We choose $\tau_n = n$ and~\eqref{eq:CMP_cond2} becomes 
\begin{equation}\label{eq:CMP_cond3}
	\lim_{n\to\infty}
	\Big\{
		\Bigl[
		H_\nu\bigl( 
			\frac{\mu[{n-1},\infty)}{\mu[n,\infty)}
		\bigr)
		\Bigr]^{-1}
		\cdot 
		\log\bigl(
			\frac{2}{\mu[n,\infty)}
		\bigr)
	\Big\}
	=
	\infty.
\end{equation}
Analogously as in~\eqref{eq:CMP_Hp1}, using~\eqref{eq:CMP_tails} and the definition~\eqref{eq:H_p_def} of $H_p$, we get that 
\begin{equation}\label{eq:CMP_Hp2}
	\Bigl[
	H_\nu\bigl( 
		\frac{\mu_\nu[n-1,\infty)}{\mu_\nu[n,\infty)}
	\bigr)
	\Bigr]^{-1}
	\ge 
	\Bigl[
	H_\nu\bigl( 
		Z_\nu n^\nu
	\bigr)
	\Bigr]^{-1}
	\ge 
	\frac{C''}{n}
\end{equation}
for any $n\ge 1$ and some small enough constant $C''>0$ (independent on $n$ but dependent on $\nu$).
We conclude~\eqref{eq:CMP_cond3} by combining~\eqref{eq:stirling} with~\eqref{eq:CMP_Hp2}.
\section*{Acknowledgements}
I would like to thank Micha{\l} Strzelecki for telling me about Problem~\ref{Pr:MOS}.
I am also very grateful to Rados{\l}aw Adamczak, Krzysztof Oleszkiewicz and Micha{\l} Strzelecki for their valuable remarks on the preliminary versions of this manuscript.
\bibliographystyle{amsplain}
\bibliography{p-logSobolev}
\end{document}